\newcommand{\g}{\mathfrak{g}}
\newcommand{\D}{\displaystyle}
\newcommand{\ra}{\rightarrow}
\newcommand{\ve}{\varepsilon}
\newcommand{\vp}{\varphi}
\newcommand{\bino}[2]{\left[\genfrac{}{}{0pt}{}{#1}{#2}\right]}
\newcommand{\ts}{\otimes}
\newcommand{\rr}{\mathcal{R}}
\newtheorem{definition}{Definition}
\newtheorem{proposition}{Proposition}
\newtheorem{theorem}{Theorem}
\newtheorem{lemma}{Lemma}
\newtheorem{corollary}{Corollary}
\newtheorem{remark}{Remark}
\title{Quantum groups, q-Boson algebras and quantized Weyl algebras}
\author{Xin Fang}
\address{
Universit\'{e} Paris 7, Institut de Math\'{e}matiques de Jussieu, Th\'{e}orie des groupes et des
repr\'{e}sentations, Case 7012, 2 place Jussieu, 75251 Paris Cedex 05, France.}
\email{fang@math.jussieu.fr}
\begin{document}

\maketitle
\begin{abstract}
We give a unified construction of quantum groups, q-Boson algebras and quantized Weyl algebras and
an action of quantum groups on quantized Weyl algebras. This enables us to give a conceptual proof
of the semi-simplicity of the category $\mathcal{O}(B_q)$ introduced by T.Nakashima and the classification of all simple objects
in it.
\end{abstract}

\section{Introduction}
In his article \cite{Kas91}, M.Kashiwara defined crystal bases for quantized enveloping algebras. To
show the existence of such bases for the strictly negative part $U_q^-(\g)$
of quantized enveloping algebras, he constructed an associative algebra generated by operators on $U_q^-(\g)$,
which is a q-analogue of boson. In fact, this algebra is a quantized version of the usual Weyl algebra and with the help of such algebra, he
proved that $U_q^-(\g)$, viewed as a module over this "quantized Weyl algebra", is simple. Moreover, he
affirmed without proof
that imposing a finiteness condition on modules over "quantized Weyl algebra" will lead to
semi-simplicity results.\\
\indent
Later, in his article \cite{Nak94}, T.Nakashima defined the so called "q-Boson algebra" $B_q(\g)$, an
extension of quantized Weyl algebra $W_q(\g)$ by a torus, and studied these algebras.
Finally, in \cite{Nak04}, he proved the semi-simplicity of
$\mathcal{O}(B_q)$, the category of modules over $B_q(\g)$ with some finiteness conditions, where the main tool is an "extremal projector" also defined in \cite{Nak04}. 
But we should point out that the proof in \cite{Nak04} depends on the "Casimir-like" element of a pairing;
to get the desired properties, the author has to use a large quantity of computation, see for example
\cite{Tan92}, \cite{Nak94} and \cite{Nak04}.\\
\indent
In this article, we will construct quantized enveloping algebras(quantum groups), q-Boson algebras
and quantized Weyl algebras in a unified method and give an action of quantum groups on quantized
Weyl algebras by the Schr\"{o}dinger representation. This enables us to give another construction
of quantized Weyl algebras with the help of the braiding in some Yetter-Drinfel'd module category.
With this construction, we can obtain a structural result for all $W_q(\g)$-modules with
a natural finiteness condition, which will lead directly to the semi-simplicity of $\mathcal{O}(B_q)$
and the classification of all simple objects in it. Moreover, the proof we give here is more conceptual: it means that
the structure of category $\mathcal{O}(B_q)$ depends heavily on the intrinsic duality of $B_q(\g)$.
As a byproduct, we prove the semi-simplicity of $W_q(\g)$-modules with a finiteness condition
and classify all simple modules of this type.\\
\indent
This work is inspired by an observation in the finite dimensional case: once we have a nondegenerate
pairing between two Hopf algebras, we may form the smash product of them, where the "module algebra type"
action is given by this pairing. If we have a finite dimensional module over this smash product,
from the duality, we will obtain simultaneously a module and a comodule structure, and the construction
of smash product is exactly the compatibility condition of the module and comodule structure to yield a Hopf module. As showed in \cite{Swe94}, all Hopf modules are trivial,
it is to say, a free module over the original Hopf algebra, and blocks are parameterized by a vector space
called "coinvariants".\\
\indent
We would like to generalize this observation to a more general case, for example, quantized Weyl algebras or q-Boson algebras.
But unfortunately, it does not work well because the action of torus part is not locally nilpotent. Our main idea for overcoming this difficulty is to hide the "torus part" behind the construction with the help
of a braiding originated in a quantum group action. This is the main reason for our use of the technical language
of Yetter-Drinfel'd modules and braided Hopf algebras.\\
\indent
Now we want to be more precise: for any module $M$ in $\mathcal{O}(B_q)$,
it is possible to restrict it to the quantized Weyl algebra to obtain a
$W_q(\g)$-module with finiteness condition. In section \ref{wqbraid}, we realize $W_q(\g)$ as an algebra obtained
from its negative and positive parts with a braiding, this enables us to get a module and comodule
structure on $M$. Unfortunately, for a $W_q(\g)$-module $M$, we will not have the compatibility condition, but it is not
too far away: they are also compatible in the sense of braiding in this case; we may still prove a trivialization result, which gives out the structure theorem of all
$W_q(\g)$-modules with finiteness condition and will lead easily to the structure theory of category $\mathcal{O}(B_q)$.\\
\indent
As in the proof for the structural theorem of Hopf modules, there exists a projection from the module to its coinvariants, which will be shown to be exactly the "extremal projector" in \cite{Nak04} and the projection given
in \cite{Kas91} in the $\mathfrak{sl}_2$ case. This explains the "extremal projector" in a more natural way.\\
\indent
The constitution of this article is as follows. In Section \ref{part1}, we recall some notions in Hopf
algebras and give out an action of quantum doubles on Heisenberg doubles with the help of Schr\"{o}dinger
representations. In Section \ref{part2}, we construct quantum groups and q-Boson algebras concretely and
calculate the action between them in the case of $\mathfrak{sl}_2$. In Section \ref{part3}, we
construct quantized Weyl algebras from the braiding in Yetter-Drinfel'd category and prove the main
theorem for the structure of $\mathcal{O}(B_q)$, at last, we compare our projection with those
defined in \cite{Kas91} and \cite{Nak04}.\\
\indent
At last, we should remark that in the preparation of this article, the preprint of A.M.Semikhatov \cite{Sem10} came into our sight, he
got essentially same results as in the Section \ref{part1} of this article, though with a different objective and point of view.\\
\\
\noindent
\textbf{Acknowledgements.}
I would like to thank my advisor Marc Rosso for suggesting me this problem,
for his guidance and encouragement. I want to thank Can Zhang for her constant support. Treasurable remarks from the referee of introducing me the article \cite{Mas09}, Miyashita-Ulbrich action and an excellent reformulation, could never be overestimated. This work is partially supported by Sino-France Collaboration Grant 34000-3275100 from Sun Yat-sen University.

\section{Hopf pairings and double constructions}\label{part1}
From now on, suppose that we are working in the complex field $\mathbb{C}$. Results in this section hold for
any field with characteristic $0$. All tensor products are over $\mathbb{C}$ if not specified otherwise.
\subsection{Yetter-Drinfel'd modules}
Let $H$ be a Hopf algebra. A vector space $V$ is called a (left) $H$-Yetter-Drinfel'd module if it is simultaneously
an $H$-module and an $H$-comodule which satisfy the Yetter-Drinfel'd compatibility condition: for any
$h\in H$ and $v\in V$,
$$\sum h_{(1)}v_{(-1)}\ts h_{(2)}.v_{(0)}=\sum (h_{(1)}.v)_{(-1)}h_{(2)}\ts (h_{(1)}.v)_{(0)},$$
where $\Delta(h)=\sum h_{(1)}\ts h_{(2)}$ and $\rho(v)=\sum v_{(-1)}\ts v_{(0)}$ are Sweedler
notations for coproduct and comodule structure maps.\\
\indent
Morphisms between two $H$-Yetter-Drinfel'd modules are linear maps preserving $H$-module
and $H$-comodule structures.\\
\indent
We denote the category of $H$-Yetter-Drinfel'd modules by ${}_H^H\mathcal{YD}$, this is a tensor category.\\
\indent
The advantage of Yetter-Drinfel'd module is: for $V,W\in {}_H^H\mathcal{YD}$, there exists a braiding
$\sigma:V\ts W\ra W\ts V$, given by $\sigma(v\ts w)=\sum v_{(-1)}.w\ts v_{(0)}$. If both $V$ and
$W$ are $H$-module algebras, $V\ts W$ will have an algebra structure
if we use $\sigma$ instead of the usual flip.

\subsection{Braided Hopf algebras in ${}_H^H\mathcal{YD}$}
In \cite{Rad85}, D.Radford constructed the biproduct of two Hopf algebras when there exists an action
and coaction between them and obtained the necessary and sufficient conditions for the existence of a Hopf
algebra structure on this biproduct. See Theorem 1 and Proposition 2 in \cite{Rad85}.\\
\indent
Once the language of Yetter-Drinfel'd module has been adopted, conditions in \cite{Rad85} can be easily rewritten.
\begin{definition}[\cite{AS94}, Section 1.3]
A braided Hopf algebra in the category ${}_H^H\mathcal{YD}$ is a collection $(A,m,\eta,\Delta,\ve,S)$ such that:\\
(1). $(A,m,\eta)$ is an algebra in ${}_H^H\mathcal{YD}$; $(A,\Delta,\ve)$ is a coalgebra in
${}_H^H\mathcal{YD}$. It is to say, $m,\eta,\Delta,\ve$ are morphisms in ${}_H^H\mathcal{YD}$;\\
(2). $\Delta:A\ra A\underline{\ts} A$ is a morphism of algebra. The notation $\underline{\ts}$ stands for the tensor
product of two Yetter-Drinfel'd module algebras where we use the braiding in ${}_H^H\mathcal{YD}$ instead of the usual flip;\\
(3). $\ve:A\ra \mathbb{C}$, $\eta:\mathbb{C}\ra A$ are algebra morphisms;\\
(4). $S$ is the convolution inverse of $Id_A\in End(A)$.
\end{definition}
\begin{remark} (1). Once a braided Hopf algebra $A$ has been given, we can form the tensor product
$A\ts H$, it yields a Hopf algebra structure, as shown in \cite{Rad85}.\\
(2). An important example here is the construction of the "positive part" of a quantized enveloping algebra
as a twist of a braided Hopf algebra with primitive coproduct by a commutative group algebra.\\
(3). For a general construction in the framework of Hopf algebras with a projection, see \cite{AS94},
section 1.5.
\end{remark}

\subsection{Braided Hopf modules}

Let $B$ be a braided Hopf algebra in some Yetter-Drinfel'd module category. For a left braided $B$-Hopf module $M$, we mean a left $B$-module and a left $B$-comodule satisfying compatibility condition as follows:
$$\rho\circ l=(m\ts l)\circ (id\ts\sigma\ts id)\circ (\Delta\ts\rho):B\ts M\ra B\ts M,$$
where $m$ is the multiplication in $B$, $l:B\ts M\ra M$ is the module structure map, $\rho:M\ra B\ts M$ is the comodule structure map and $\sigma$ is the braiding in the fixed Yetter-Drinfel'd category.\\
\noindent
\textbf{Example.} Let $V$ be a vector space over $\mathbb{C}$. Then $B\ts V$ admits a $B$-braided Hopf module structure given by: for $b,b'\in B$ and $v\in V$,
$$b'.(b\ts v)=b'b\ts v,\ \ \rho(b\ts v)=\sum b_{(1)}\ts b_{(2)}\ts v\in B\ts (B\ts V).$$
\qed\\
\indent
We let ${}_B^B\mathcal{M}$ denote the category of left braided $B$-Hopf modules. The following proposition gives the triviality of such kind of modules.
\begin{proposition}\label{triviality}
Let $M\in{}_B^B\mathcal{M}$ be a braided Hopf module, $\rho:M\ra B\ts M$ be the structural map, $M^{co\rho}=\{m\in M|\ \rho(m)=1\ts m\}$ be the set of coinvariants. Then there exists an isomorphism of braided $B$-Hopf modules:
$$M\cong B\ts M^{co\rho},$$
where the right hand side adopts the trivial Hopf module structure. Moreover, maps in two directions are given by:
$$M\ra B\ts M^{co\rho},\ \ m\mapsto\sum m_{(-1)}\ts P(m_{(0)}),$$
$$B\ts M^{co\rho}\ra M,\ \ b\ts m\mapsto bm,$$
where $m\in M$, $b\in B$ and $P:M\ra M^{co\rho}$ is defined by: $P(m)=\sum S(m_{(-1)})m_{(0)}$.
\end{proposition}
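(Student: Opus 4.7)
The plan is to mirror the classical proof of the fundamental theorem of Hopf modules of Larson--Sweedler, adapted to the braided setting of ${}_B^B\mathcal{M}$. I will show that the two explicit maps $\alpha:M\ra B\ts M^{co\rho}$, $m\mapsto\sum m_{(-1)}\ts P(m_{(0)})$ and $\beta:B\ts M^{co\rho}\ra M$, $b\ts m\mapsto b.m$ are mutually inverse morphisms of braided Hopf modules, via four steps: (i) check that $P(m)\in M^{co\rho}$, so that $\alpha$ genuinely lands in $B\ts M^{co\rho}$; (ii) verify $\beta\circ\alpha=\mathrm{id}_M$; (iii) verify $\alpha\circ\beta=\mathrm{id}_{B\ts M^{co\rho}}$; (iv) check compatibility with the module and comodule structures on each side.

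For step (i), I apply $\rho$ to $P(m)=\sum S(m_{(-1)}).m_{(0)}$ using the braided compatibility $\rho(b.x)=\sum b_{(1)}\cdot ((b_{(2)})_{(-1)}.x_{(-1)})\ts (b_{(2)})_{(0)}.x_{(0)}$, then invoke the braided anticomultiplicativity $\Delta\circ S=(S\ts S)\circ\sigma_{B,B}\circ\Delta$ together with comodule coassociativity, and finally collapse one tensor factor via the antipode axiom and the counit of $\rho$ to reach $1\ts P(m)$. For step (ii), one has $\beta(\alpha(m))=\sum m_{(-1)}\cdot (S((m_{(0)})_{(-1)}).(m_{(0)})_{(0)})$; by comodule coassociativity this rearranges to $\sum (m_{(-1)})_{(1)} S((m_{(-1)})_{(2)}).m_{(0)}=\sum\ve(m_{(-1)})m_{(0)}=m$. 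For step (iii), on a coinvariant $n$ one has $\rho(n)=1\ts n$, and the $H$-counit axiom forces the compatibility formula to simplify to $\rho(b.n)=\sum b_{(1)}\ts b_{(2)}.n$ (the braiding contribution is killed because $1\in B$ is a trivial $H$-comodule); then $P(b_{(2)}.n)=\sum S((b_{(2)})_{(1)}).(b_{(2)})_{(2)}.n=\ve(b_{(2)})n$ and hence $\alpha(b.n)=b\ts n$.

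Step (iv) is then routine: $\beta$ is $B$-linear by associativity of the action, and the calculation of $\rho\circ\beta$ done in step (iii) shows that $\beta$ is also $B$-colinear for the trivial Hopf-module structure on $B\ts M^{co\rho}$; the map $\alpha$ inherits the morphism property as the two-sided inverse of $\beta$. The main obstacle is step (i): one has to carefully track the braiding $\sigma$ introduced by the ${}_H^H\mathcal{YD}$-structure on $B$, which interacts nontrivially with both the coproduct and the antipode. The cleanest way to run this computation is via string diagrams in the braided monoidal category ${}_H^H\mathcal{YD}$, where the classical argument transports verbatim with every crossing interpreted as $\sigma$; a cancellation then occurs between the $\sigma$ in the identity for $\Delta\circ S$ and the $\sigma$ in the compatibility axiom, producing the braided analogue of the standard Sweedler manipulation.
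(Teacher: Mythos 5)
Your proposal is correct and follows exactly the route the paper itself indicates: the paper's entire proof is the one-line remark that the classical Sweedler argument for the triviality of Hopf modules adapts to the braided case, and your four steps are precisely that adaptation, with step (i) correctly identified as the only place where the braiding genuinely intervenes (via $\Delta\circ S=(S\ts S)\circ\sigma\circ\Delta$ and the braided compatibility axiom). One cosmetic point: in step (iii) what kills the braiding is that $\eta:\mathbb{C}\ra B$ is a morphism in ${}_H^H\mathcal{YD}$, so $h.1_B=\ve(h)1_B$, i.e.\ $1_B$ is invariant under the $H$-\emph{action} appearing in $\sigma(b_{(2)}\ts 1_B)$, rather than the $H$-comodule structure on $1_B$ being what matters.
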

The proof for the triviality of Hopf modules given in \cite{Swe94} can be adopted to the braided case.
\begin{remark}
Proposition \ref{triviality} can be translated into the categorical language, which says that there exists an equivalence of category ${}_B^B\mathcal{M}\sim Vect$, where $Vect$ is the category of vector spaces, given by $M\mapsto M^{co\rho}$ and $V\mapsto B\ts V$ for $M\in{}_B^B\mathcal{M}$ and $V\in Vect$.
\end{remark}

\subsection{Generalized Hopf pairings}
Generalized Hopf pairings give dualities between Hopf algebras.\\
\indent
Let $A$ and $B$ be two Hopf algebras with invertible antipodes. A
generalized Hopf pairing between $A$ and $B$ is a bilinear form
$\vp:A\times B\ra\mathbb{C}$ such that:\\
(1). for any $a\in A$, $b,b'\in B$,
$\vp(a,bb')=\sum\vp(a_{(1)},b)\vp(a_{(2)},b')$;\\
(2). for any $a,a'\in A$, $b\in B$,
$\vp(aa',b)=\sum\vp(a,b_{(2)})\vp(a',b_{(1)})$;\\
(3). for any $a\in A$, $b\in B$,
$\vp(a,1)=\ve(a),\ \ \vp(1,b)=\ve(b)$.
\begin{remark}
From the uniqueness of the antipode and conditions (1)-(3) above, we have: for any $a\in A$, $b\in B$,
$\vp(S(a),b)=\vp(a,S^{-1}(b))$.
\end{remark}

\subsection{Quantum doubles}
Let $A$ and $B$ be two Hopf algebras with invertible antipodes and $\vp$ be
a generalized Hopf pairing between them. The quantum double $D_\vp(A,B)$ is defined by:\\
(1). as a vector space, it is $A\ts B$;\\
(2). as a coalgebra, it is the tensor product of coalgebras $A$ and $B$;\\
(3). as an algebra, the multiplication is given by:
$$(a\ts b)(a'\ts b')=\sum \vp(S^{-1}(a_{(1)}'),b_{(1)})\vp(a_{(3)}',b_{(3)})aa_{(2)}'\ts b_{(2)}b'.$$

\subsection{Schr\"{o}dinger Representations}\label{Sch}
The prototype of Schr\"{o}dinger representation in physics is the momentum group $G$ action on a
position space $M$; this will give out an action of
$\mathbb{C}(M)\rtimes\mathbb{C}(G)$ on $\mathbb{C}(M)$. Details of this view point can be found
in the chapter 6 of \cite{Maj95}.\\
\indent
The definition and proposition in this subsection are essentially in \cite{Maj95}, Example 7.1.8.\\
\indent
The Schr\"{o}dinger representation of $D_\vp(A,B)$ on $A$ is given
by: for $a,x\in A$, $b\in B$,
$$(a\ts 1).x=\sum a_{(1)}xS(a_{(2)}),$$
$$(1\ts b).x=\sum \vp(x_{(1)},S(b))x_{(2)}.$$
The Schr\"{o}dinger representation of $D_\vp(A,B)$ on $B$ is given
by: for $a\in A$, $b,y\in B$,
$$(a\ts 1).y=\sum \vp(a,y_{(1)})y_{(2)},$$
$$(1\ts b).y=\sum b_{(1)}yS(b_{(2)}).$$
So
$$(a\ts b).x=\sum \vp(x_{(1)},S(b))a_{(1)}x_{(2)}S(a_{(2)}),$$
$$(a\ts b).y=\sum \vp(a,b_{(1)}y_{(1)}S(b_{(4)}))b_{(2)}y_{(2)}S(b_{(3)}).$$
\begin{proposition}[\cite{Maj95}, Example 7.1.8]\label{AB}
With the definition above, both $A$ and $B$ are $D_\vp(A,B)$-module
algebras.
\end{proposition}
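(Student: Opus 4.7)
The plan is to verify the statement for $A$; the statement for $B$ will follow from the same argument with the roles of $A$ and $B$ swapped. The overall strategy is to reduce to the generating subalgebras $A\ts 1$ and $1\ts B$: since $D_\vp(A,B)=(A\ts 1)(1\ts B)$ as a vector space, these generate $D_\vp(A,B)$ as an algebra. Moreover, both the module associativity $(hg).x=h.(g.x)$ and the module algebra condition $h.(xy)=\sum(h_{(1)}.x)(h_{(2)}.y)$ cut out subsets of $D_\vp(A,B)$ closed under multiplication (the latter using coassociativity), so it is enough to check these axioms on each generating subalgebra together with the single cross-commutation relation of the quantum double.

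For the action of $A\ts 1$ on $A$ given by $a.x=\sum a_{(1)}xS(a_{(2)})$, this is the standard left adjoint action of a Hopf algebra on itself, a well-known module algebra structure. For the action of $1\ts B$ on $A$ given by $b.x=\sum\vp(x_{(1)},S(b))\,x_{(2)}$, the associativity $(bb').x=b.(b'.x)$ follows from the pairing axiom $\vp(x,bb')=\sum\vp(x_{(1)},b)\vp(x_{(2)},b')$ together with coassociativity, while the module algebra identity $b.(xy)=\sum(b_{(1)}.x)(b_{(2)}.y)$ reduces to the second pairing axiom applied to $S(b)$, using $\Delta(S(b))=\sum S(b_{(2)})\ts S(b_{(1)})$.

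The principal obstacle is verifying the cross-commutation relation, namely
$$(1\ts b).\bigl((a\ts 1).x\bigr)=\sum\vp(S^{-1}(a_{(1)}),b_{(1)})\,\vp(a_{(3)},b_{(3)})\,(a_{(2)}\ts b_{(2)}).x.$$
I would expand the left side as $\sum\vp((a_{(1)}xS(a_{(2)}))_{(1)},S(b))\,(a_{(1)}xS(a_{(2)}))_{(2)}$, split the coproduct of the triple product, apply the pairing axioms iteratively to produce three pairing factors, and then rewrite them using $\Delta^2(S(b))=\sum S(b_{(3)})\ts S(b_{(2)})\ts S(b_{(1)})$ together with the antipode compatibility $\vp(S(a),b)=\vp(a,S^{-1}(b))$ recorded in the remark following the definition of generalized Hopf pairing (and its substituted form $\vp(a,S(b))=\vp(S^{-1}(a),b)$). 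Once coassociativity is used to identify the four surviving Sweedler components of $a$, the expression matches the right side term by term. This is essentially Sweedler bookkeeping, but it is the only place where all three pairing axioms and the antipode identity interact simultaneously, which is what makes it delicate.

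Once these generator-level verifications and the cross-relation are in hand, the reduction from the first paragraph extends both the module and module algebra axioms to all of $D_\vp(A,B)$, establishing that $A$ is a $D_\vp(A,B)$-module algebra. The statement for $B$ follows by applying exactly the same reasoning to the Schr\"odinger formulas on $B$ given just before the proposition, with the roles of $A$ and $B$ interchanged throughout.
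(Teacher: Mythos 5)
Your proposal is correct, but it takes a genuinely different route from the paper. You prove the statement by direct verification: reduce to the generating subalgebras $A\ts 1$ and $1\ts B$, observe that the adjoint action of $A$ on itself and the pairing-induced action of $B$ are each module algebra actions, and then check by Sweedler bookkeeping that the actions respect the cross-commutation relation $(1\ts b)(a\ts 1)=\sum\vp(S^{-1}(a_{(1)}),b_{(1)})\vp(a_{(3)},b_{(3)})\,a_{(2)}\ts b_{(2)}$ of the double; I have checked that this computation does close up (the three pairing factors produced from $\Delta^2$ of $a_{(1)}xS(a_{(2)})$ match the right-hand side after applying $\vp(a,S(b))=\vp(S^{-1}(a),b)$ and $\vp(S(a),S(b))=\vp(a,b)$), and your remark that the module-algebra condition cuts out a multiplicatively closed subspace is the right way to propagate the check from generators to all of $D_\vp(A,B)$. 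This is essentially the computation behind Example 7.1.8 of Majid's book, which the paper cites. The paper itself, however, does not argue this way: in the subsection on twisted products it realizes $D_\vp(A,B)\cong H^\sigma$ and $H_\vp(A,B)={}_\sigma H$ for $H=B\ts A$ and the $2$-cocycle $\sigma$ built from $\vp$, invokes Schauenburg's result that the Miyashita-Ulbrich action on a cleft Hopf-Galois extension yields an algebra in ${}_{H^\sigma}^{H^\sigma}\mathcal{YD}$, and then computes that this action, restricted to the subalgebras $A$ and $B$ of $H_\vp$, reproduces exactly the Schr\"odinger formulas. The trade-off is clear: your argument is elementary and self-contained but must be redone (with the asymmetry of the double kept in mind --- the $B$ case is parallel rather than literally obtained by swapping letters) for each of Propositions \ref{AB}, \ref{modulealgebra}, \ref{yedrinAB} and \ref{yetterdrinfeld}, whereas the paper's Hopf-Galois route delivers all four at once and moreover gives the Yetter-Drinfel'd (not merely module algebra) structure for free, at the cost of importing the machinery of $2$-cocycle twists and Miyashita-Ulbrich actions.
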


\subsection{Heisenberg doubles}
Keep assumptions in previous sections. Now we construct the Heisenberg double
between $A$ and $B$; it is the smash product of them where the module algebra type action of $A$
on $B$ is given by the Hopf pairing. For the background of this double, see \cite{Lu94}.\\
\indent
The Heisenberg double $H_\vp(A,B)$ is an algebra defined as follows:\\
(1). as a vector space, it is $B\ts A$ and we denote the pure tensor by $b\sharp a$;\\
(2). the product is given by: for $a,a'\in A$, $b,b'\in B$,
$$(b\sharp a)(b'\sharp a')=\sum \vp(a_{(1)},b_{(1)}')bb_{(2)}'\sharp a_{(2)}a'.$$
\noindent
\begin{remark} 
In general, $H_\vp(A,B)$ has no Hopf algebra structure.
\end{remark}

\subsection{Quantum double action on Heisenberg double}
We define an action of $D_\vp(A,B)$ on $H_\vp(A,B)$ as follows: for
$a,a'\in A$, $b,b'\in B$,
$$(a\ts b).(b'\sharp a')=\sum (a_{(1)}\ts b_{(1)}).b'\sharp (a_{(2)}\ts b_{(2)}).a',$$
this is a diagonal type action. Moreover, we have the
following result:
\begin{proposition}\label{modulealgebra}
With this action, $H_\vp(A,B)$ is a $D_\vp(A,B)$-module algebra.
\end{proposition}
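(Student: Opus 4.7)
The plan is to verify the two defining axioms of a $D_\vp(A,B)$-module algebra. For the module structure itself, I would simply observe that Proposition \ref{AB} already gives $D_\vp(A,B)$-module structures on both $A$ and $B$, and that the coalgebra underlying $D_\vp(A,B)$ is the tensor product coalgebra $A\ts B$; hence the diagonal formula $d.(b\sharp a)=\sum d_{(1)}.b\sharp d_{(2)}.a$ is precisely the tensor product of these two modules and is automatically a module action. The unit condition $d.1_{H_\vp}=\ve(d)1_{H_\vp}$ descends immediately from the unit conditions of the two Schr\"odinger actions.

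The substance lies in the multiplicative axiom $d.(uv)=\sum(d_{(1)}.u)(d_{(2)}.v)$. Writing $u=b\sharp a$ and $v=b'\sharp a'$ and expanding both sides using the Heisenberg multiplication formula together with the fact (Proposition \ref{AB}) that $A$ and $B$ are themselves $D_\vp(A,B)$-module algebras, I would cancel the matching outer factors $d_{(1)}.b$ (in the $B$-slot) and $d_{(4)}.a'$ (in the $A$-slot) that appear identically on both sides. What remains is the following identity in $B\ts A$, required for all $d\in D_\vp(A,B)$, $a\in A$, $b\in B$:
\[
\sum \vp(a_{(1)}, b_{(1)})\, d_{(1)}.b_{(2)}\ts d_{(2)}.a_{(2)} \;=\; \sum \vp((d_{(1)}.a)_{(1)}, (d_{(2)}.b)_{(1)})\,(d_{(2)}.b)_{(2)}\ts (d_{(1)}.a)_{(2)}.
\]
Conceptually, this identity asserts that the ``Heisenberg braiding'' $\mu:A\ts B\to B\ts A$, $\mu(a\ts b)=\sum\vp(a_{(1)},b_{(1)})b_{(2)}\ts a_{(2)}$, is a morphism of $D_\vp(A,B)$-modules when source and target both carry the diagonal action.

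The main obstacle is the verification of this equivariance of $\mu$. By linearity and since $D_\vp(A,B)$ is generated as an algebra by $A\ts 1$ and $1\ts B$, it suffices to treat the two cases $d=x\ts 1$ with $x\in A$ and $d=1\ts y$ with $y\in B$. In each case both sides of the displayed identity unfold directly via the explicit Schr\"odinger formulas of Section \ref{Sch}, and the resulting expressions collapse to a common Sweedler-level sum using only the pairing axioms (1)--(3), the antipode compatibility $\vp(S(a),b)=\vp(a,S^{-1}(b))$ noted just after the definition of generalized Hopf pairing, and the standard Hopf-algebra identities $\sum h_{(1)}S(h_{(2)})=\ve(h)=\sum S(h_{(1)})h_{(2)}$. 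The calculation is long but purely mechanical; its content is precisely that the Schr\"odinger action was engineered so that the ``pairing half'' of the action of one of $A,B$ and the ``adjoint half'' of the action of the other intertwine consistently with the Heisenberg twist.
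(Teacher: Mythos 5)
Your proposal is correct, but it follows a genuinely different route from the paper. You give a direct verification: you reduce the module-algebra axiom to the statement that the ``Heisenberg twist'' $\mu:A\ts B\ra B\ts A$, $\mu(a\ts b)=\sum\vp(a_{(1)},b_{(1)})b_{(2)}\ts a_{(2)}$, is a morphism of $D_\vp(A,B)$-modules, and then check this on the algebra generators $x\ts 1$ and $1\ts y$. Both reductions are sound: since $m_{H_\vp}=(m_B\ts m_A)\circ(id\ts\mu\ts id)$ and $A$, $B$ are already $D_\vp$-module algebras by Proposition \ref{AB}, equivariance of $\mu$ is exactly what is missing; and the set of $d$ for which a fixed linear map intertwines two module structures is a subalgebra, so generators suffice. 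The two generator computations do close up as you claim --- in each case the pairing axioms collapse a factor $\sum\vp(S(x_{(i)}),b_{(j)})\vp(x_{(i+1)},b_{(j-1)})$ (or its mirror) via $\sum S(h_{(1)})h_{(2)}=\ve(h)$, and in fact the antipode-inverse identity is not even needed once you restrict to generators. The paper proceeds quite differently: it realizes $D_\vp(A,B)$ as the cocycle twist $H^\sigma$ and $H_\vp(A,B)$ as ${}_\sigma H$ for $H=B\ts A$ and the $2$-cocycle $\sigma$ built from $\vp$, identifies the action with the Miyashita-Ulbrich action of the cleft Hopf-Galois extension ${}_\sigma H/\mathbb{C}$, and quotes Corollary 3.1 of \cite{Sch96}, which yields Propositions \ref{AB}, \ref{modulealgebra}, \ref{yedrinAB} and (with Proposition \ref{isom}) \ref{yetterdrinfeld} simultaneously. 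Your argument is more elementary and self-contained, and isolates a statement of independent interest (equivariance of $\mu$); the paper's argument costs external machinery but explains where the action comes from and delivers the full Yetter-Drinfel'd compatibility with the coaction, which your computation does not address. The only cosmetic caveat: the phrase ``cancel the matching outer factors'' should be read as ``it suffices to prove the displayed identity for all $d$ and then apply it to the middle Sweedler leg $d_{(2)}$,'' which is what you in effect do.
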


To be more precise, the above action can be written as:
$$(a\ts b).(b'\sharp a')=\sum \vp(a_{(1)},b_{(1)}b_{(1)}'S(b_{(4)}))\vp(a_{(1)}',S(b_{(5)}))
b_{(2)}b_{(2)}'S(b_{(3)})\sharp a_{(2)}a_{(2)}'S(a_{(3)}).$$
\begin{remark}
This proposition gives a family of examples for Yang-Baxter algebras; for the
definition and fundamental properties, see \cite{JR09}. Properties of such kind of algebra make it
possible to define a braiding on the tensor product of $H_\vp(A,B)$, which gives an algebra structure on
$H_\vp(A,B)^{\ts n}$. Equivalently, we can translate this braiding in the framework of Yetter-Drinfel'd
modules, which is much more useful for future applications.
\end{remark}
\indent
We define a $D_\vp(A,B)$-comodule structure on both $A$ and $B$ as follows:
$$A\ra D_\vp(A,B)\ts A,\ \ a\mapsto \sum a_{(1)}\ts 1\ts a_{(2)},$$
$$B\ra D_\vp(A,B)\ts B,\ \ b\mapsto \sum 1\ts b_{(1)}\ts b_{(2)}.$$

\begin{proposition}\label{yedrinAB}
With Schr\"{o}dinger representations and comodule structure maps defined above, both $A$ and $B$ are in the category ${}^{D_\vp}_{D_\vp}\mathcal{YD}$.
\end{proposition}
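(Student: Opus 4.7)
The plan is to check the required data in two stages. First, one verifies that the formulas $a \mapsto (a_{(1)} \otimes 1) \otimes a_{(2)}$ and $b \mapsto (1 \otimes b_{(1)}) \otimes b_{(2)}$ really define $D_\vp$-comodule structures; this follows because $A \otimes 1$ and $1 \otimes B$ are sub-Hopf-algebras of $D_\vp(A,B)$, which in turn is a direct consequence of the multiplication formula together with the pairing axioms $\vp(1,\cdot) = \ve$ and $\vp(\cdot,1) = \ve$. The coactions then reduce to the regular coactions (i.e.\ the coproducts $\Delta_A$ and $\Delta_B$) corestricted along the corresponding Hopf embeddings, so the comodule axioms are automatic.

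Second, one checks the Yetter--Drinfel'd compatibility. A key preliminary observation is that the YD condition is multiplicative in $h$: a short Sweedler manipulation---invoking YD for $k$ inside $\sum h_{(1)} k_{(1)} v_{(-1)} \otimes h_{(2)} k_{(2)}.v_{(0)}$, then applying YD for $h$ to the element $k_{(1)}.v$---shows that whenever $h,k \in D_\vp$ both satisfy YD on $v$, so does $hk$. Since $D_\vp = (A \otimes 1)(1 \otimes B)$ as an algebra, it is therefore enough to verify YD for $h$ in each of the two subalgebras $A \otimes 1$ and $1 \otimes B$ separately.

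For $h \in A \otimes 1$ acting on $v \in A$, the Schr\"odinger action is exactly the left adjoint action $\sum a_{(1)} v S(a_{(2)})$ and the coaction is $\Delta_A$. Here YD reduces to the classical fact that $A$ lies in ${}^A_A \mathcal{YD}$ via its adjoint action and left regular coaction, transported along the Hopf embedding $A \hookrightarrow D_\vp$. The symmetric statement handles $h \in 1 \otimes B$ acting on $v \in B$.

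The remaining two cases---$h = 1 \otimes b$ on $v \in A$, and symmetrically $h = a \otimes 1$ on $v \in B$---demand a direct Sweedler computation. For the first, expanding the LHS via the double multiplication rule and the action formula $(1 \otimes b).x = \sum \vp(x_{(1)}, S(b)) x_{(2)}$ produces three pairing factors. Normalising the antipode by $\vp(S^{-1}(\cdot),\cdot) = \vp(\cdot, S(\cdot))$, then combining two of these via $\sum \vp(x_{(1)}, c)\vp(x_{(2)}, d) = \vp(x, cd)$, and finally applying the antipode identity $\sum x_{(1)} S(x_{(2)}) = \ve(x) 1$ collapses the inner Sweedler components to counits; the residual expression matches the RHS. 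The second case is symmetric, with the roles of the two arguments of $\vp$ swapped. The main obstacle is the combinatorial bookkeeping---one must expand $v$ and $b$ to five and four Sweedler components respectively and keep the pairings aligned---rather than any conceptual subtlety.
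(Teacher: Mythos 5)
Your argument is correct, but it follows a genuinely different route from the paper's. The paper does not check the Yetter--Drinfel'd axioms by hand: it identifies $D_\vp(A,B)$ with the cocycle twist $H^\sigma$ of $H=B\ts A$ and $H_\vp(A,B)$ with ${}_\sigma H$, notes that ${}_\sigma H$ is a cleft $H^\sigma$-Hopf--Galois extension of $\mathbb{C}$, computes that the Miyashita--Ulbrich action restricted to the comodule subalgebras $A$ and $B$ recovers precisely the Schr\"{o}dinger representations, and then quotes Schauenberg's general result (Corollary 3.1 of \cite{Sch96}) that the Miyashita--Ulbrich action together with the coaction yields an object of the Yetter--Drinfel'd category; that single citation disposes of Propositions \ref{AB}, \ref{modulealgebra} and \ref{yedrinAB} at once. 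Your proof is instead a direct verification, and it goes through: the reduction to $h$ in the two subalgebras via multiplicativity of the YD condition is sound (though worth writing out once, since in the second step the spectator factor $k_{(2)}$ sits inside the first tensor leg and must be carried through the identity for $h$ applied to $w=k_{(1)}.v$); the two diagonal cases do reduce to the classical fact that adjoint action plus regular coaction put a Hopf algebra in its own Yetter--Drinfel'd category, transported along the Hopf embeddings $A\ts 1,\,1\ts B\hookrightarrow D_\vp$; and the two mixed cases close up as you sketch --- for $h=1\ts b$ on $x\in A$, for instance, the three pairing factors collapse using $\vp(x,S(b))=\vp(S^{-1}(x),b)$, the pairing axiom for products in the first argument, and $\sum S^{-1}(x_{(2)})x_{(1)}=\ve(x)1$ (your sketch cites the other pairing axiom and the other antipode identity, but the bookkeeping is the same). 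What your approach buys is an elementary, self-contained argument; what the paper's buys is economy and a conceptual explanation of \emph{why} the Schr\"{o}dinger representation carries a Yetter--Drinfel'd structure: it is a Miyashita--Ulbrich action.
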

More generally, we have the following result.
\begin{proposition}\label{yetterdrinfeld}
With the comodule structure map defined by:
$$\delta:H_\vp(A,B)\ra D_\vp(A,B)\ts H_\vp(A,B),\ \ b\sharp a\mapsto \sum ((1\ts b_{(1)})(a_{(1)}\ts 1))\ts b_{(2)}\sharp a_{(2)},$$
for $a\in A$, $b\in B$, $H_\vp(A,B)$ is in the category ${}^{D_\vp}_{D_\vp}\mathcal{YD}$.
\end{proposition}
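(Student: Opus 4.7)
The plan is to recognize that, under the vector space identification $H_\vp(A,B)=B\ts A$ via $b\sharp a\leftrightarrow b\ts a$, the module and comodule structures appearing in the statement are exactly the tensor products of the corresponding structures on $B$ and on $A$ from Proposition \ref{yedrinAB}, and then to invoke the monoidality of ${}^{D_\vp}_{D_\vp}\mathcal{YD}$ already noted in the paper.

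First I would use that the coalgebra structure of $D_\vp(A,B)$ is just the tensor product of those of $A$ and $B$, so $\Delta_{D_\vp}(a\ts b)=\sum(a_{(1)}\ts b_{(1)})\ts(a_{(2)}\ts b_{(2)})$. The stated action rewrites as $(a\ts b).(b'\sharp a')=\sum (a\ts b)_{(1)}.b'\sharp (a\ts b)_{(2)}.a'$, which is precisely the diagonal action on the tensor product of the two $D_\vp$-modules $B$ and $A$ carrying their Schr\"odinger actions. Similarly, Proposition \ref{yedrinAB} gives $b_{(-1)}\ts b_{(0)}=(1\ts b_{(1)})\ts b_{(2)}$ and $a_{(-1)}\ts a_{(0)}=(a_{(1)}\ts 1)\ts a_{(2)}$, so the tensor product comodule structure on $B\ts A$, namely $b\ts a\mapsto\sum b_{(-1)}a_{(-1)}\ts b_{(0)}\ts a_{(0)}$, equals $\sum(1\ts b_{(1)})(a_{(1)}\ts 1)\ts b_{(2)}\sharp a_{(2)}$, which is exactly $\delta$.

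Next I would invoke the fact, already recorded in the paper, that ${}^{D_\vp}_{D_\vp}\mathcal{YD}$ is a tensor category: the tensor product of two Yetter--Drinfel'd modules with the diagonal action and the multiplicative diagonal coaction $v\ts w\mapsto v_{(-1)}w_{(-1)}\ts v_{(0)}\ts w_{(0)}$ is again a Yetter--Drinfel'd module. Combined with Proposition \ref{yedrinAB}, this immediately places $B\ts A=H_\vp(A,B)$ in ${}^{D_\vp}_{D_\vp}\mathcal{YD}$.

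The main obstacle is essentially bookkeeping rather than conceptual: one must be careful that the product $(1\ts b_{(1)})(a_{(1)}\ts 1)$ taken inside $D_\vp$, rather than a bare tensor, is what the monoidal structure of ${}^{D_\vp}_{D_\vp}\mathcal{YD}$ actually prescribes on a tensor product of two coactions. A reader preferring a direct proof can instead check the Yetter--Drinfel'd compatibility on $H_\vp(A,B)$ by hand on generators $h=a\ts 1$ and $h=1\ts b$ of $D_\vp$ separately, in each case expanding both sides and reducing, via the coalgebra decomposition of $D_\vp$, to the Yetter--Drinfel'd compatibilities for $A$ and $B$ supplied by Proposition \ref{yedrinAB}; the two generator cases then assemble into the general one by the algebra structure of $D_\vp$ and the fact that both sides of the Yetter--Drinfel'd axiom are linear in $h$.
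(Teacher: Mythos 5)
Your proof is correct, but it follows a genuinely different route from the paper's. The paper obtains Proposition \ref{yetterdrinfeld} as a consequence of the Hopf--Galois machinery: it identifies $D_\vp\cong H^\sigma$ and $H_\vp={}_\sigma H$ via the $2$-cocycle $\sigma$ built from the pairing (Proposition \ref{isom}), notes that ${}_\sigma H$ is a cleft $H^\sigma$-Hopf--Galois extension, and then cites Corollary 3.1 of Schauenburg to conclude that the Miyashita--Ulbrich action together with the original coaction makes ${}_\sigma H$ a Yetter--Drinfel'd module (indeed an algebra object in ${}^{D_\vp}_{D_\vp}\mathcal{YD}$). You instead observe that the stated action and coaction on $B\ts A$ are exactly the diagonal action and the codiagonal coaction $b\ts a\mapsto \sum b_{(-1)}a_{(-1)}\ts b_{(0)}\ts a_{(0)}$ of the tensor product of the two Yetter--Drinfel'd modules $B$ and $A$ of Proposition \ref{yedrinAB}, and then invoke the monoidality of ${}^{D_\vp}_{D_\vp}\mathcal{YD}$, which the paper records. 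This identification is accurate (the coalgebra of $D_\vp$ is the tensor product coalgebra, and the product $(1\ts b_{(1)})(a_{(1)}\ts 1)$ in $D_\vp$ is precisely $b_{(-1)}a_{(-1)}$), so your argument is complete and more elementary, requiring no Hopf--Galois theory beyond Proposition \ref{yedrinAB} itself. What the paper's heavier route buys is that the same citation simultaneously delivers Propositions \ref{AB}, \ref{modulealgebra} and \ref{yedrinAB}, in particular the module-algebra compatibility, which your monoidality argument does not address (nor does the statement require it). Your closing sketch of a direct verification is also sound, provided you justify that the Yetter--Drinfel'd condition is multiplicative in $h$ --- easiest seen by rewriting it as $\rho(h.v)=\sum h_{(1)}v_{(-1)}S(h_{(3)})\ts h_{(2)}.v_{(0)}$ --- so that checking it on the generating subalgebras $A\ts 1$ and $1\ts B$ of $D_\vp$ suffices.
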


The rest part of this section will be devoted to giving proofs of these propositions using the Miyashita-Ulbrich action. This is recommended by the referee.

\subsection{Twisted product}
Let $H$ be a Hopf algebra over $\mathbb{C}$, $\sigma:H\ts H\ra\mathbb{C}$ be a 2-cocycle which is invertible in $(H\ts H)^*$ (for a definition, see \cite{DT94} or \cite{Lu94}). Then we can form the following two twisted products on $H$.

\begin{definition}
The twisted algebra $H^\sigma$ is defined as follows:\\
(1). as a vector space, it is $H$ itself;\\
(2). for any $x,y\in H$, the product in $H^\sigma$ is given by:
$$x\bullet y=\sum \sigma(x_{(1)},y_{(1)})x_{(2)}y_{(2)}\sigma^{-1}(x_{(3)},y_{(3)}),$$
where $\sigma^{-1}$ is the inverse of $\sigma$ in $(H\ts H)^*$.
\end{definition}

With the original coproduct on $H$, $H^\sigma$ is a Hopf algebra.

\begin{definition}
The twisted algebra ${}_\sigma H$ is defined as follows:\\
(1). as a vector space, it is $H$ itself;\\
(2). for any $x,y\in H$, the product in ${}_\sigma H$ is given by:
$$x\circ y=\sum \sigma(x_{(1)},y_{(1)})x_{(2)}y_{(2)}.$$
\end{definition}

The coproduct on $H$ gives ${}_\sigma H$ a left $H^\sigma$-comodule algebra structure. Moreover, ${}_\sigma H$ is cleft $H^\sigma$-Hopf-Galois extension over $\mathbb{C}$ on the left such that the identity map $\gamma:H^\sigma\ra {}_\sigma H$ is a convolution-invertible $H^\sigma$-comodule morphism (see, for example, Theorem 4.3 in \cite{MS99}). We let $\gamma^{-1}$ denote the convolution-inverse of $\gamma$.\\
\indent
For $x\in H^\sigma$, $y\in {}_\sigma H$,
$$x\rightharpoonup y=\sum \tau(x_{(1)})\circ y\circ \tau^{-1}(x_{(2)})$$
gives the Miyashita-Ulbrich action of $H^\sigma$ on ${}_\sigma H$ (see \cite{Sch96} for the definition).\\
\indent
From Corollary 3.1 in \cite{Sch96}, the Miyashita-Ulbrich action of $H^\sigma$ on ${}_\sigma H$ and the original coaction make ${}_\sigma H$ into an algebra object in the category ${}_{H^\sigma}^{H^\sigma}\mathcal{YD}$.

\subsection{Application to the double construction}
We preserve notations in previous sections.\\
\indent
Let $A,B$ be two Hopf algebras and $H=B\ts A$ be their tensor product. Suppose that there exists a Hopf pairing $\vp$ between $A$ and $B$. Then we can define a 2-cocycle using this pairing: for $a,a'\in A$ and $b,b'\in B$,
$$\sigma:H\ts H\ra\mathbb{C},\ \ \sigma(b\ts a,b'\ts a')=\ve(b)\vp(a,b')\ve(a').$$
Moreover, the inverse of $\sigma$ is given by:
$$\sigma^{-1}:H\ts H\ra\mathbb{C},\ \ \sigma^{-1}(b\ts a,b'\ts a')=\ve(b)\vp(a,S(b'))\ve(a').$$

\begin{proposition}[\cite{DT94},\cite{Lu94}]\label{isom}
(1). There exists an isomorphism of Hopf algebras:
$$D_\vp(A,B)\ra H^\sigma,\ \ a\ts b\mapsto (1\ts a)\bullet (b\ts 1).$$
(2). As algebras, $H_\vp(A,B)={}_\sigma H$.
\end{proposition}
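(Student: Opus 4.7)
I will tackle (2) first since it is a direct unwinding of definitions. The tensor coalgebra on $H = B \ts A$ gives $\Delta(b \ts a) = \sum (b_{(1)} \ts a_{(1)}) \ts (b_{(2)} \ts a_{(2)})$, so the multiplication $\circ$ of ${}_\sigma H$ becomes
\[
(b \ts a) \circ (b' \ts a') = \sum \sigma(b_{(1)} \ts a_{(1)}, b'_{(1)} \ts a'_{(1)})\,(b_{(2)} b'_{(2)} \ts a_{(2)} a'_{(2)}).
\]
Substituting $\sigma(b_{(1)} \ts a_{(1)}, b'_{(1)} \ts a'_{(1)}) = \ve(b_{(1)}) \vp(a_{(1)}, b'_{(1)}) \ve(a'_{(1)})$ collapses the first coproduct slot of $b$ and of $a'$ to identities, leaving $\sum \vp(a_{(1)}, b'_{(1)})\, b b'_{(2)} \ts a_{(2)} a'$, which is exactly the Heisenberg double multiplication $(b \sharp a)(b' \sharp a')$.

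For (1), the starting observation is the short computation $(b \ts 1) \bullet (1 \ts a) = b \ts a$ in $H^\sigma$: since all three Sweedler components of $b \ts 1$ and $1 \ts a$ have the identity in the complementary tensor slot, both $\sigma$ and $\sigma^{-1}$ degenerate to counits and $\bullet$ reduces to the tensor algebra multiplication on $H$. Combined with the immediate $(a \ts 1)(1 \ts b) = a \ts b$ in $D_\vp(A, B)$ and the two cocycle computations $(1 \ts a) \bullet (1 \ts a') = 1 \ts aa'$ and $(b \ts 1) \bullet (b' \ts 1) = bb' \ts 1$, this shows that $H^\sigma$ and $D_\vp(A,B)$ are each generated by matched pairs of Hopf subalgebras isomorphic to $A$ and $B$, and that the candidate map $\Phi: a \ts b \mapsto (1 \ts a) \bullet (b \ts 1)$ restricts to these embeddings as the identity.

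The algebra morphism property of $\Phi$ then reduces to the cross-relation check, namely $\Phi((1 \ts b)(a \ts 1)) = (b \ts 1) \bullet (1 \ts a) = b \ts a$. Expanding the quantum double product gives $(1 \ts b)(a \ts 1) = \sum \vp(S^{-1}(a_{(1)}), b_{(1)}) \vp(a_{(3)}, b_{(3)})\,(a_{(2)} \ts b_{(2)})$, and applying $\Phi$ yields a five-fold Sweedler expression in $H^\sigma$ that must telescope to $b \ts a$ via the pairing axioms and the antipode convolution identity. This is the main obstacle: the four pairing factors produced by the two applications of $\Phi$ have to balance against the two pairing factors introduced by the double multiplication, and making this explicit is where the real work lies. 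Bijectivity is then a consequence, since $\Phi$ is a surjective algebra map onto a Hopf algebra generated by the images of $A$ and $B$, while injectivity is transparent because the associated graded of $\Phi$ with respect to the coradical filtration is the flip $a \ts b \mapsto b \ts a$.

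Coalgebra compatibility is the easiest piece and would come last. Both $D_\vp(A, B)$ and $H^\sigma$ carry the tensor-product coalgebra structure inherited from $A$ and $B$, and since $\Phi$ is built from the Hopf algebra multiplication $\bullet$ of $H^\sigma$, the multiplicativity of $\Delta$ gives $\Delta((1 \ts a) \bullet (b \ts 1)) = \sum ((1 \ts a_{(1)}) \bullet (b_{(1)} \ts 1)) \ts ((1 \ts a_{(2)}) \bullet (b_{(2)} \ts 1))$, which is exactly $(\Phi \ts \Phi)(\Delta(a \ts b))$.
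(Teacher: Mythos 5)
The paper itself gives no proof of this proposition --- it is quoted from Doi--Takeuchi and Lu --- so there is no internal argument to compare yours against. Your part (2) is complete and correct: with the tensor coalgebra on $H=B\ts A$, the cocycle $\sigma$ collapses the first legs of $b$ and $a'$ to counits and what survives is exactly the Heisenberg multiplication. Your strategy for (1) is also the standard one: both $H^\sigma$ and $D_\vp(A,B)$ contain $A$ and $B$ as Hopf subalgebras generating everything, $D_\vp(A,B)$ is the free product of $A$ and $B$ modulo the single straightening relation, so an algebra morphism is pinned down by its values on the generators together with the verification that their images satisfy that one relation.

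The gap is that you stop precisely at the step you yourself label as ``where the real work lies,'' and that step \emph{is} the content of part (1); as written, the proof asserts rather than establishes that the five-fold Sweedler expression telescopes. It does, and quickly, so you should write it out. One computes $\Phi(a\ts b)=\sum\vp(a_{(1)},b_{(1)})\vp(a_{(3)},S(b_{(3)}))\,b_{(2)}\ts a_{(2)}$; applying $\Phi$ to $(1\ts b)(a\ts 1)=\sum\vp(S^{-1}(a_{(1)}),b_{(1)})\vp(a_{(3)},b_{(3)})\,a_{(2)}\ts b_{(2)}$ and using $\vp(S^{-1}(a),b)=\vp(a,S(b))$, the four pairing factors group into two copies of the combination $\sum\vp(a_{(1)},S(b_{(1)}))\vp(a_{(2)},b_{(2)})$, which equals $\ve(a)\ve(b)$ by pairing axiom (1) together with the antipode axiom $\sum S(b_{(1)})b_{(2)}=\ve(b)1$; both pairs collapse and only $b\ts a=(b\ts 1)\bullet(1\ts a)$ survives. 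A second, smaller issue: the injectivity argument via the coradical filtration is less transparent than you claim (one must first relate the coradical filtration of the tensor coalgebra $A\ts B$ to those of the factors and check that $\Phi$ is filtered with associated graded the flip). The clean route is to exhibit the explicit inverse $\Psi(b\ts a)=\sum\vp(a_{(1)},S(b_{(1)}))\vp(a_{(3)},b_{(3)})\,a_{(2)}\ts b_{(2)}$ and verify $\Psi\circ\Phi=\mathrm{id}$ and $\Phi\circ\Psi=\mathrm{id}$ by the same collapsing identity. The coalgebra compatibility paragraph is fine as stated.
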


Thus $H_\vp$ is a cleft $D_\vp$-Hopf-Galois extension over $\mathbb{C}$ on the left.

In this case, we compute the Miyashita-Ulbrich action explicitly. Note that $D_\vp$ includes $A$, $B$ as Hopf subalgebras, and that $H_\vp$ includes $A$ (resp., $B$) as a left $A$- (resp., $B$-)comodule subalgebra. It results that the identity map $D_\vp\overset{\sim}{\longrightarrow} H^\sigma\ra{}_\sigma H=H_\vp$, restricted to $A$, $B$, has antipodes of $A$, $B$ as convolution inverses.\\
\indent
Let $a,a'\in A$ and $b,b'\in B$. Then
\begin{eqnarray*}
(a\ts 1)\rightharpoonup (b\ts 1) &=& \sum (1\ts a_{(1)})(b\ts 1)(1\ts S(a_{(2)}))\\
&=& \sum\vp(a,b_{(1)})b_{(2)}\ts 1;
\end{eqnarray*}
\begin{eqnarray*}
(a\ts 1)\rightharpoonup (1\ts a') &=& \sum (1\ts a_{(1)})(1\ts a')(1\ts S(a_{(2)}))\\
&=& 1\ts\sum a_{(1)}a'S(a_{(2)});
\end{eqnarray*}
\begin{eqnarray*}
(1\ts b)\rightharpoonup (1\ts a) &=& \sum (b_{(1)}\ts 1)(1\ts a)(S(b_{(2)}\ts 1)\\
&=& 1\ts \sum\vp(a_{(1)},S(b))a_{(2)}; 
\end{eqnarray*}
\begin{eqnarray*}
(1\ts b)\rightharpoonup (b'\ts 1) &=& \sum (b_{(1)}\ts 1)(b'\ts 1)(S(b_{(2)}\ts 1)\\
&=& \sum b_{(1)}b'S(b_{(2)})\ts 1.
\end{eqnarray*}
This recovers Schr\"{o}dinger representations of $D_\vp(A,B)$ on $A$ and $B$.\\
\indent
Now Proposition \ref{AB}, \ref{modulealgebra} and \ref{yedrinAB} are direct corollaries of Corollary 3.1 in \cite{Sch96}. Proposition \ref{yetterdrinfeld} comes from the same corollary in \cite{Sch96} and Proposition \ref{isom}.

\section{Construction of quantum algebras}\label{part2}
This section is devoted to the construction of three important quantum algebras: quantum groups, quantized Weyl algebras
and q-Boson algebras from the machinery built in the last section.
\subsection{Definitions and notations}
Assume that $q\in\mathbb{C}^*$ is not a root of unity. The q-numbers are defined by:
$$[n]=\frac{q^n-q^{-n}}{q-q^{-1}},\ \ [n]!=\prod_{i=1}^n[i],\ \ \bino{n}{k}=\frac{[n]!}{[k]![n-k]!}.$$
\indent
Let $\g$ be a finite dimensional complex semi-simple Lie algebra, $\mathfrak{h}$ its Cartan subalgebra,
$n=dim\mathfrak{h}=rank(\g)$, $Q$ its root lattice, $Q_+$ be the set of its positive roots,
$\mathcal{P}$ its weight lattice, $\Delta=\{\alpha_1\,\cdots,\alpha_n\}$ the set of simple roots,
$\mathcal{P}_+=\mathcal{P}\cap(\mathbb{Q}\ts_{\mathbb{Z}}Q)$. We denote $(\cdot,\cdot)$ the inner
product on $\mathfrak{h}^*$ given by the Killing form and $<\lambda,\mu>=\frac{2(\lambda,\mu)}{(\lambda,\lambda)}$
 for $\lambda,\mu\in\mathfrak{h}^*$. Define $q_i=q^{\frac{(\alpha_i,\alpha_i)}{2}}$.\\
\indent
Bricks of our construction are Hopf algebras $\widetilde{U_q^+}$ and $\widetilde{U_q^-}$,
which are defined by generators and relations:\\
(1). $\widetilde{U_q^+}$ is generated by $E_i$, ($i=1,\cdots,n$),
$K_\lambda^{\pm 1}$ ($\lambda\in\mathcal{P}_+$) with relations:\\
$$K_\lambda E_iK_\lambda^{-1}=q^{(\alpha_i,\lambda)}E_i,\ \
K_\lambda K_\lambda^{-1}=K_\lambda^{-1}K_\lambda=1.$$
It has a Hopf algebra structure given by:
$$\Delta(K_\lambda)=K_\lambda\ts K_\lambda,
\ \ \Delta(E_i)=E_i\ts K_{\alpha_i}^{-1}+1\ts E_i,$$
$$\ve(E_i)=0,\ \ \ve(K_\lambda)=1,\ \ S(E_i)=-E_iK_{\alpha_i},\ \ S(K_\lambda)=K_\lambda^{-1}.$$
(2). $\widetilde{U_q^-}$ is generated by $F_i$, ($i=1,\cdots,n$),
${K_\lambda'}^{\pm 1}$ ($\lambda\in\mathcal{P}_+$)  with relations:\\
$$K_\lambda'F_i{K_\lambda'}^{-1}=q^{-(\alpha_i,\lambda)}F_i,
\ \ K_\lambda'{K_\lambda'}^{-1}={K_\lambda'}^{-1}K_\lambda'=1.$$
It has a Hopf algebra structure given by:
$$\Delta(K_\lambda')=K_\lambda'\ts K_\lambda',\ \ \Delta(F_i)=F_i\ts 1+K_{\alpha_i}'\ts F_{\alpha_i},$$
$$\ve(F_i)=0,\ \ \ve(K_\lambda')=1,\ \ S(F_i)=-F_i{K_{\alpha_i}'}^{-1},\ \ S(K_\lambda')={K_\lambda'}^{-1}.$$
If $\lambda=\alpha_i$ for some $i$, we denote $K_i:=K_{\alpha_i}$.

\subsection{Construction of quantum groups}
The construction in this section can be found in \cite{KRT94}.\\
\indent
Denote by $D_\vp(\widetilde{U_q^+},\widetilde{U_q^-})$ the quantum double of $\widetilde{U_q^+}$ and $\widetilde{U_q^-}$, where
the generalized Hopf pairing $\vp:\widetilde{U_q^+}\times \widetilde{U_q^-}\ra\mathbb{C}$ is given by:
$$\vp(E_i,F_j)=\frac{\delta_{ij}}{q_i^{-1}-q_i},\ \ \vp(K_\lambda,K_\mu')=q^{-(\lambda,\mu)},
\ \ \vp(E_i,K_\lambda')=\vp(K_\lambda,F_i)=0,$$
$$\vp(E',1)=\ve(E'),\ \ \vp(1,F')=\ve(F'),\ \ \forall E'\in \widetilde{U_q^+},\ F'\in \widetilde{U_q^-}.$$
Now, from the definition of the multiplication in quantum double,
$$(1\ts F_j)(E_i\ts 1)=\vp(E_i,F_j)1\ts K_j'+E_i\ts F_i+\vp(S^{-1}(E_i),F_j)K_i^{-1}\ts 1,$$
it is to say,
$$E_iF_j-F_jE_i=\delta_{ij}\frac{K_i'-K_i^{-1}}{q_i-q_i^{-1}}.$$
With the same method, we also have:
$$K_\lambda'E_i=q^{(\lambda,\alpha_i)}E_iK_\lambda',\ \ F_iK_\lambda=q^{(\lambda,\alpha_i)}K_\lambda F_i.$$
\indent
So the quantum group $U_q(\g)$ can be obtained as follows: at first, to get a nondegenerate pairing, we
need to do the quotient by its left and right radical, denoted $I_l$ and $I_r$ respectively. Denote
$U_q^+=\widetilde{U_q^+}/I_l$ and $U_q^-=\widetilde{U_q^-}/I_r$. So $\vp$ induces a
nondegenerate pairing on
$U_q^+\ts U_q^-$, denote it also by $\vp$. The quantum group associated to $\g$ is just the quotient:
$$U_q(\g)=D_\vp(U_q^+,U_q^-)/(K_\lambda-K_\lambda').$$
\begin{remark}
 $I_l$ (resp. $I_r$) is generated by quantized Serre relations in $\widetilde{U_q^+}$
(resp. $\widetilde{U_q^-}$).
\end{remark}

\subsection{Heisenberg double and q-Boson algebras}
The procedure above, once applied to the Heisenberg double, will give q-Boson algebra.\\
\indent
In this section, we directly use $U_q^+$ and $U_q^-$ (it is to say, we add quantized Serre relations)
and generators $U_q^+=<e_i,t_\lambda^{\pm 1}>$, $U_q^-=<f_i,{t'}_\lambda^{\pm 1}>$
for making it distinct from the quantum double case. Moreover, $t_i:=t_{\alpha_i}$.\\
\indent
Now compute the multiplication structure between $U_q^+$ and $U_q^-$:\\
$$(1\sharp t_\lambda)(f_i\sharp 1)=\vp(t_\lambda,t_i')f_i\sharp t_\lambda
=q^{-(\alpha_i,\lambda)}f_i\sharp t_\lambda,$$
$$(1\sharp e_i)(t_\lambda'\sharp 1)=t_\lambda'\sharp e_i.$$
For this reason, it is better to adopt generators $e_i'=(q_i^{-1}-q_i)t_ie_i$ and this leads to:
$$\Delta(e_i')=e_i'\ts 1+t_i\ts e_i',\ \ t_\lambda e_i't_\lambda^{-1}=q^{(\alpha_i,\lambda)}e_i',\ \ \vp(e_i',f_j)=\delta_{ij}.$$
So at this time,
$$(1\sharp e_i')(t_\lambda'\sharp 1)=\vp(t_i,t_\lambda')t_\lambda'\sharp e_i'
=q^{-(\lambda,\alpha_i)}t_\lambda'\sharp e_i',$$
this is what we desired.\\
\indent
We calculate the relation between $e_i'$ and $f_j$:
$$(1\sharp e_i')(f_j\sharp 1)=\vp(e_i',f_j)+\vp(t_i,t_j')f_j\sharp e_i'
=q^{-(\alpha_i,\alpha_j)}f_j\sharp e_i'+\delta_{ij},$$
a simplification of the notation will give:
$$e_i'f_j=q^{-(\alpha_i,\alpha_j)}f_je_i'+\delta_{ij}.$$
Then all relations in q-Boson algebra have beed recovered and then
$$B_q(\g)\cong H_\vp(U_q^+,U_q^-)/(t_\lambda-t_\lambda'),$$
where $B_q(\g)$ is the q-Boson algebra defined in \cite{Nak94} and \cite{Nak04}.

\subsection{Action of quantum doubles on Heisenberg doubles}
Proposition \ref{yetterdrinfeld} gives an action of $D_\vp(U_q^+,U_q^-)$ on $H_\vp(U_q^+,U_q^-)$ such that
$H_\vp(U_q^+,U_q^-)$ is a $D_\vp(U_q^+,U_q^-)$-Yetter-Drinfel'd module. In this section, we will
show that, this action gives a $U_q(\g)$-module algebra structure on $H_\vp(U_q^+,U_q^-)$, but it
can not pass to the quotient to get an action on $B_q(\g)$. So more naturally, we need to introduce the
quantized Weyl algebra $W_q(\g)$: this is a subalgebra of $B_q(\g)$, a $U_q(\g)$-Yetter-Drinfel'd module and a $U_q(\g)$-module algebra.\\
\indent
At first, we calculate the action of $K_\lambda$ and $K_\lambda'$:
$$K_\lambda.e_i'=adK_\lambda(e_i')=q^{(\lambda,\alpha_i)}e_i',
\ \ K_\lambda'.e_i'=\vp(t_i,{K_\lambda'}^{-1})e_i'=q^{(\lambda,\alpha_i)}e_i',$$
$$K_\lambda.f_i=\vp(K_\lambda,t_i')f_i=q^{-(\lambda,\alpha_i)}f_i,
\ \ K_\lambda'.f_i=adK_\lambda'(f_i)=q^{-(\lambda,\alpha_i)}f_i.$$
So the action of $D_\vp(U_q^+,U_q^-)$ on $H_\vp(U_q^+,U_q^-)$ may pass to the quotient to give a $U_q(\g)$-module structure on $H_\vp(U_q^+,U_q^-)$.\\
\indent
But this in general can not give an action of $U_q(\g)$ on $B_q(\g)$ as we will show in an example later.
Denote $W_q(\g)$ the subalgebra of $B_q(\g)$ generated by $e_i'$ and $f_j$. It is a quantized version
of classical Weyl algebra: taking the Cartan matrix $C=0$ and $q=1$
will recover the usual Weyl algebra. (The condition $C=0$ has to do with quantized Serre relations.) The
name "quantized Weyl algebra" is proposed by A.Joseph in \cite{Jos95}. In \cite{Kas91}, M.Kashiwara calls it
"q-analogue of Boson".\\
\indent
From the definition of Schr\"{o}dinger representation and Proposition \ref{yetterdrinfeld}, we have:

\begin{proposition}
$W_q(\g)$ is a $U_q(\g)$-module algebra, moreover, it is a $U_q(\g)$-Yetter-Drinfel'd module.
\end{proposition}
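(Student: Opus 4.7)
The plan is to verify that the $D_\vp(U_q^+, U_q^-)$-action on $H_\vp(U_q^+, U_q^-)$ from Propositions \ref{modulealgebra} and \ref{yetterdrinfeld} restricts to $W_q(\g)$ and descends through the quotient $U_q(\g) = D_\vp/(K_\lambda - K_\lambda')$, preserving both the module-algebra and Yetter-Drinfel'd structures.

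First I would establish stability of $W_q(\g)$ under the $D_\vp$-action. Because the action makes $H_\vp$ a module algebra and $W_q(\g)$ is generated as an algebra by the $e_i'$ and $f_j$, it suffices to check that each of $E_i, F_i, K_\lambda, K_\lambda'$ sends every $e_j'$ and $f_j$ back into $W_q(\g)$. The $K$-actions merely rescale generators, as the excerpt already shows. For the nilpotent generators I would apply the Schr\"odinger formulas of Section \ref{Sch} to the coproducts $\Delta(e_i') = e_i'\ts 1 + t_i\ts e_i'$ and $\Delta(f_j)=f_j\ts 1 + t_j'\ts f_j$, using the pairing values $\vp(e_i',f_j)=\delta_{ij}$, $\vp(e_i',t_\mu')=0$, $\vp(t_\lambda,f_j)=0$. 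A direct computation then yields, for example,
\[
E_i.e_j' \;=\; \frac{1}{q_i^{-1}-q_i}\bigl(e_i'e_j' - q^{(\alpha_i,\alpha_j)}e_j'e_i'\bigr)\in W_q(\g),
\]
while $E_i.f_j$ reduces to a scalar and the remaining cases $F_i.e_j'$, $F_i.f_j$ are symmetric.

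Second, I would pass to the quotient. The excerpt has already verified that $K_\lambda$ and $K_\lambda'$ act by the same scalar on each $e_i'$ and each $f_j$. Since both are grouplike and the action is by a module algebra, $K_\lambda$ and $K_\lambda'$ act as algebra automorphisms on $W_q(\g)$; two automorphisms agreeing on a generating set agree everywhere, so the ideal $(K_\lambda-K_\lambda')\subset D_\vp$ annihilates $W_q(\g)$ and the action factors through $U_q(\g)$.

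Third, the module-algebra property descends directly from Proposition \ref{modulealgebra} by restriction and quotient. For the Yetter-Drinfel'd structure, I would restrict the comodule map $\delta$ of Proposition \ref{yetterdrinfeld} to $W_q(\g)$ and, again evaluating on $e_i'$ and $f_j$, verify that the image lies in the image of $U_q(\g)\ts W_q(\g)$ inside $D_\vp\ts H_\vp$; the Yetter-Drinfel'd compatibility is then inherited from $H_\vp$ verbatim.

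The main obstacle I anticipate is the stability computation for $F_i.e_j'$ (and dually $E_i.f_j$ beyond the trivial scalar piece): one must carefully bookkeep the $t_\lambda,t_\lambda'$ factors produced by the iterated coproducts in the Schr\"odinger formulas and confirm that, modulo the ideal $(t_\lambda-t_\lambda')$ defining $B_q(\g)$, they reassemble into polynomials in the $e_i'$ and $f_j$ alone. This is precisely the step that would fail on all of $B_q(\g)$ rather than the subalgebra $W_q(\g)$, since $E_i$ acts nontrivially on $t_\lambda$ via the adjoint action while annihilating $t_\lambda'$, so the ideal $(t_\lambda-t_\lambda')\subset H_\vp$ is not $D_\vp$-stable; this explains the contrast flagged in the paragraph preceding the proposition.
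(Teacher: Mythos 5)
Your proposal is correct and follows essentially the same route as the paper, which proves the proposition simply by invoking the Schr\"odinger representation and Proposition \ref{yetterdrinfeld} together with the preceding computation that $K_\lambda$ and $K_\lambda'$ act identically on the generators $e_i'$, $f_j$; you merely make explicit the generator-by-generator stability check ($E_i.e_j'$ a $q$-commutator in $B_q^{++}$ via the adjoint action, $E_i.f_j$ and $F_i.e_j'$ scalars, $F_i.f_j\in B_q^{--}$) and the descent through the quotient $(K_\lambda-K_\lambda')$. The only caveat is the normalization in your displayed formula for $E_i.e_j'$, which is off by a power of $q$ (one finds $\mathrm{ad}(e_i)(e_j')=\frac{q^{-(\alpha_i,\alpha_i)-(\alpha_i,\alpha_j)}}{q_i^{-1}-q_i}\bigl(e_i'e_j'-q^{(\alpha_i,\alpha_j)}e_j'e_i'\bigr)$), but this does not affect the stability argument.
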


\subsection{Example}
In this section, we compute the action of quantum double on Heisenberg double
in the $\g=\mathfrak{sl}_2$ case. Generators of $D_\vp(U_q^+,U_q^-)$ are $E,F,K^{\pm 1},{K'}^{\pm 1}$;
for $H_\vp(U_q^+,U_q^-)$, they are $e,f,t^{\pm 1},{t'}^{\pm 1}$.\\
\indent
At first, we calculate the action of $K$ and $K'$:
$$K.e'=adK(e')=q^2e',\ \ K'.e'=\vp(t,{K'}^{-1})e'=q^2e',$$
$$K.f=\vp(K,t')f=q^{-2}f,\ \ K'.f=adK'(f)=q^{-2}f.$$
So the action of $D_\vp(U_q^+,U_q^-)$ on $H_\vp(U_q^+,U_q^-)$ may pass to the quotient to give a $U_q(\mathfrak{sl}_2)$-module structure on $H_\vp(U_q^+,U_q^-)$.\\
\indent
But in general, it is not possible to obtain an action of $U_q(\mathfrak{sl}_2)$ on $B_q(\mathfrak{sl}_2)$ because of the following
computation:
$$E.t=adE(t)=(1-q^2)et^2,\ \ E.t'=0.$$
So it is natural to consider the action of $U_q(\g)$ on $W_q(\g)$.\\
\indent
In the end, it is better to write down all other actions: first recall that $e=\frac{t^{-1}e'}{q^{-1}-q}$,
suppose that $m\leq n$:
$$E^m.{e'}^n=\frac{[n+m-1]!}{[n-1]!}q^{-\frac{(2n+3+m)m}{2}}{e'}^{n+m},\ \
E^m.f^n=\frac{1}{(q^{-1}-q)^m}\frac{[n]!}{[n-m]!}q^{\frac{(2n-m-1)m}{2}}f^{n-m},$$
$$F^m.{e'}^n=(-1)^m\frac{[n]!}{[n-m]!}q^{\frac{(2n+3-m)m}{2}}{e'}^{n-m},\ \
F^m.f^n=\prod_{i=0}^{m-1}(1-q^{-2(n+i)})f^{n+m}.$$
These formulas will be useful for the calculation in Section \ref{wqbraid}.

\section{Modules over q-Boson algebras}\label{part3}
Sometimes, we use notations $U_q$, $B_q$ and $W_q$ instead of $U_q(\g)$, $B_q(\g)$ and $W_q(\g)$.
Capital letters will be used for elements in $U_q(\g)$, lowercases for $B_q(\g)$ and $W_q(\g)$.\\
\indent
Denote $B_q^{++}$ and $B_q^{--}$ subalgebras of $B_q(\g)$
generated by $e_i'$, $f_j$ ($1\leq i,j\leq n$) respectively and $B_q^0$ the subalgebra generated by
$t_\lambda^{\pm 1}$ ($\lambda\in\mathcal{P}_+$). Let $U_q^0$ be the sub-Hopf algebra of $U_q$ generated
by $K_\lambda^{\pm 1}$ ($\lambda\in\mathcal{P}_+$).

\subsection{Construction of $W_q(\g)$ from braiding}\label{wqbraid}
\indent
We have seen in the previous section that $W_q(\g)$ is in
${}^{U_q}_{U_q}\mathcal{YD}$. \\
\indent
On $B_q^{++}$, there exists a $U_q$-Yetter-Drinfel'd module algebra structure: the $U_q$-module structure is given by the Schr\"{o}dinger representation and the $U_q$-comodule
structure is given by $\delta(e_i')=(q_i^{-1}-q_i)K_iE_i\ts 1+K_i\ts e_i'$. It is easy to see that $B_q^{++}$ is indeed a $U_q$-module because the adjoint action preserves $B_q^{++}$. These structures are compatible because
$\delta$ is just $\Delta$ in $U_q$.\\

\indent
In the category ${}^{U_q}_{U_q}\mathcal{YD}$, we can use the
braiding to give the tensor product of two module algebras a structure of algebra. For our purpose,
consider
$W_q\ts W_q$, and denote the braiding by $\sigma$;
then $(m\ts m)\circ(id\ts\sigma\ts id)$:
$$W_q\ts W_q\ts W_q\ts W_q\ra W_q\ts W_q\ts W_q\ts
W_q\ra W_q\ts W_q,$$ gives $W_q\ts W_q$ a structure of
algebra. We denote this algebra by
$W_q\underline{\ts} W_q$. \\
\indent
We want to restrict this
braiding to the subspace $B_q^{--}\ts B_q^{++}\subset W_q\ts W_q$. This requires to restrict the $U_q$-comodule structure on $W_q$ to $B_q^{++}$ and the $U_q$-module structure on $W_q$ to $B_q^{--}$. The comodule structure
could be directly restricted as we did
in the beginning of this section; the possibility for the restriction of the module structure comes from the fact that the
Schr\"{o}dinger representation makes $B_q^{--}$ stable. This gives an algebra $B_q^{--}\underline{\ts} B_q^{++}$. \\
\indent
We calculate the product in $B_q^{--}\underline{\ts} B_q^{++}$:\\
$$(f_i\ts 1)(1\ts e_j')=f_i\ts e_j',$$
\begin{eqnarray*}
(1\ts e_i')(f_j\ts 1) &=& \sum (e_i')_{(-1)}\cdot f_j\ts (e_i')_{(0)}\\
&=& ((q^{-1}-q)K_iE_i)\cdot f_j\ts 1+K_i\cdot f_j\ts e_i'\\
&=& \delta_{ij}+q^{-(\alpha_i,\alpha_j)}f_j\ts e_i'.
\end{eqnarray*}
This is nothing but relations in quantized Weyl algebra $W_q(\g)$.
Moreover, as a vector space, $W_q(\g)$ has a decomposition $W_q(\g)\cong B_q^{--}\ts B_q^{++}$,
and the inverse map is given by the multiplication; so we have:
\begin{proposition}
There exists an algebra isomorphism:
$$B_q^{--}\underline{\ts} B_q^{++}\cong W_q(\g),\ \ f\ts e\mapsto fe,$$
where $f\in B_q^{--}$, $e\in B_q^{++}$.
\end{proposition}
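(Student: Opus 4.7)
The plan is to check that the multiplication map $\phi: B_q^{--} \underline{\ts} B_q^{++} \ra W_q(\g)$, $f \ts e \mapsto fe$, is a bijective algebra morphism. Bijectivity as a linear map is already handed to us by the vector space decomposition $W_q(\g) \cong B_q^{--} \ts B_q^{++}$ noted just above the proposition (a PBW-type statement inherited from the triangular decomposition of the Heisenberg double $H_\vp(U_q^+, U_q^-)$), so only the algebra morphism property needs checking.

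Since $B_q^{--}$ and $B_q^{++}$ both embed as subalgebras of $W_q(\g)$, $\phi$ clearly respects products confined to either factor, and the only non-trivial check is the ``crossing rule'': for every $e \in B_q^{++}$ and $f \in B_q^{--}$,
$$ef = \sum (e_{(-1)} \cdot f)\, e_{(0)} \quad \text{in } W_q(\g), \qquad (\star)$$
where $\delta(e) = \sum e_{(-1)} \ts e_{(0)}$ is the restricted $U_q$-coaction on $B_q^{++}$ and the dot is the Schr\"odinger action on $B_q^{--}$. The excerpt's own computation already verifies $(\star)$ on the generators $e = e_i'$, $f = f_j$. Once $(\star)$ holds in general, the braided product pushes forward to the product in $W_q$:
$$\phi\bigl((f \ts e)(f' \ts e')\bigr) = \sum f\,(e_{(-1)} \cdot f')\, e_{(0)} e' \stackrel{(\star)}{=} f \cdot ef' \cdot e' = \phi(f \ts e)\,\phi(f' \ts e').$$

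To extend $(\star)$ from the generators: if $(\star)$ holds for $e$ and $e'$ against all $f$, then using the comodule algebra identity $\delta(ee') = \sum e_{(-1)} e'_{(-1)} \ts e_{(0)} e'_{(0)}$ together with $(ee')f = e(e'f)$, a direct rewrite gives $(\star)$ for $ee'$; if it holds for $f$ and $f'$ against all $e$, the module algebra axiom $h \cdot (ff') = \sum (h_{(1)} \cdot f)(h_{(2)} \cdot f')$ combined with coassociativity of $\delta$ gives $(\star)$ for $ff'$. Since $\{e_i'\}$ generates $B_q^{++}$ and $\{f_j\}$ generates $B_q^{--}$, $(\star)$ propagates to all of $B_q^{++} \ts B_q^{--}$.

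The main obstacle is precisely the inductive extension of $(\star)$: although each individual step is a routine Yetter-Drinfel'd diagram chase, one must carefully track the interplay of coproduct, action and multiplication. Conceptually, $(\star)$ expresses that $W_q(\g)$ sits as a braided smash product built from $B_q^{--}$ and $B_q^{++}$ inside $H_\vp(U_q^+, U_q^-)$; this is exactly what the braided tensor product formalism is designed to encode, so the argument is essentially a verification of compatibility rather than a new computation.
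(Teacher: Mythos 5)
Your proof is correct and follows essentially the same route as the paper: the paper likewise verifies the crossing relation $(1\ts e_i')(f_j\ts 1)=\delta_{ij}+q^{-(\alpha_i,\alpha_j)}f_j\ts e_i'$ on generators, identifies it with the defining relations of $W_q(\g)$, and invokes the vector space decomposition $W_q(\g)\cong B_q^{--}\ts B_q^{++}$ with the multiplication as inverse. The only difference is that you spell out the inductive propagation of the crossing rule via the comodule algebra and module algebra axioms, a step the paper leaves implicit; your version is therefore a more detailed rendering of the same argument.
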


\subsection{Modules over $W_q(\g)$}
This subsection is devoted to studying modules over $W_q(\g)$ with finiteness condition.\\
\indent
We define the category $\mathcal{O}(W_q)$ as a full subcategory of $W_q(\g)$-modules which contains those $W_q(\g)$-modules satisfying: for any $M$ in $\mathcal{O}(W_q)$ and any $m\in M$, there exists an integer $l>0$ such that for any $1\leq i_1,\cdots,i_l\leq n$, $e_{i_1}'e_{i_2}'\cdots e_{i_l}'.m=0$.\\
\indent
Let $M$ be a $W_q(\g)$-module in $\mathcal{O}(W_q)$.  The braided Hopf algebras $B_q^{++}$ and $B_q^{--}$ are both $\mathbb{N}$-graded by defining $deg(e_i')=deg(f_i)=1$. We denote $B_q^{++}(n)$ the finite dimensional subspace of $B_q^{++}$ containing elements of degree $n$ and $(B_q^{++})^g=\bigoplus_{n\geq 0}B_q^{++}(n)^*$ the graded dual coalgebra of $B_q^{++}$.\\
\indent
Recall that there exists a pairing between $B_q^{++}$ and $B_q^{--}$ given by
$\vp(e_i',f_j)=\delta_{ij}$. Because $\vp(B_q^{++}(n), B_q^{--}(m))=0$ for $m\neq n$ and the restriction of $\vp$ on $B_q^{++}(n)\times B_q^{--}(n)$, $n\geq 0$, is non-degenerate, the graded dual of $B_q^{++}$ is anti-isomorphic to $B_q^{--}$ as graded coalgebras. The prefix "anti" comes from Lemma 2.5 in \cite{Mas09} for the restricted pairing $B_q^{++}\times B_q^{--}\ra \mathbb{C}$ on braided Hopf algebras. Thus we obtain
$$(B_q^{++})^g\cong B_q^{--}.$$

From the definition, the action of $B_q^{++}$ on $M$ is locally nilpotent, and so from duality, we obtain a left
$(B_q^{++})^g$-comodule structure on $M$. With the help of the isomorphism above, there is a left
$B_q^{--}$-comodule structure on $M$ given by: if we denote $\rho:M\ra B_q^{--}\ts M$ by
$\rho(m)=\sum m_{(-1)}\ts m_{(0)}$, then for $e\in B_q^{++}$,
$$e.m=\sum \vp(e,m_{(-1)})m_{(0)}.$$
\\
\indent
Thus from a $W_q$-module, we obtain a $B_q^{--}$-module which is at the same time a $B_q^{--}$-comodule, and is, moreover, a braided Hopf module.\\

\begin{remark} \label{remark7}
Here, for the left $B_q^{--}$-comodule structure on $M$, it is needed to consider the braided Hopf algebra structure
on $B_q^{--}$, it is to say, $\Delta_0:B_q^{--}\ra B_q^{--}\underline{\ts} B_q^{--}$,
$\Delta_0(f_i)=f_i\ts 1+1\ts f_i$. We use the primitive coproduct but twist the multiplication
structure to get a good duality between left $B_q^{++}$-modules and right $B_q^{--}$-comodules. For
the left $B_q^{--}$-module structure on $M$, we keep the ordinary coproduct $\Delta(f_i)=f_i\ts 1+t_i\ts f_i\in B_q^-\ts B_q^{--}$.
\end{remark}
\indent
We define a linear projection
$\pi:B_q^-\ra B_q^{--}$ by $ft\mapsto f\ve(t)$, where $f\in B_q^{--}$ and $t\in B_q^0$.
\begin{proposition}\label{compatibility}
The following compatibility relation between the module and comodule structures defined above holds: for $f\in B_q^{--}$, $m\in M$,
\begin{eqnarray}\label{projection}
\rho(f.m)=\sum \pi(f_{(1)}m_{(-1)})\ts f_{(2)}.m_{(0)}=\Delta_0(f)\rho(m).
\end{eqnarray}
\end{proposition}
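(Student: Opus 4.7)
The plan is to establish the first equality by duality, using the graded non-degenerate Hopf pairing $\vp:B_q^{++}\times B_q^{--}\ra\mathbb{C}$, which by construction of $\rho$ satisfies $e.m=\sum\vp(e,m_{(-1)})m_{(0)}$ for every $e\in B_q^{++}$. Both sides of the first equality lie in $B_q^{--}\ts M$ with finite graded support in the first tensor factor (by the local nilpotency defining $\mathcal{O}(W_q)$), so it suffices to verify that they agree after applying $\vp(e,\cdot)\ts\mathrm{id}_M$ for every $e\in B_q^{++}$.

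For this I would compute $e.(f.m)=(ef).m$ inside $B_q$. The Heisenberg-double multiplication of Section \ref{part2} gives $ef=\sum\vp(e_{(1)},f_{(1)})f_{(2)}e_{(2)}$, where the coproducts are those of the Hopf subalgebras $B_q^{\pm}$, so $\Delta(e)\in B_q^+\ts B_q^{++}$ and $\Delta(f)\in B_q^-\ts B_q^{--}$ (cf.\ Remark \ref{remark7}). Applying the identity $e_{(2)}.m=\sum\vp(e_{(2)},m_{(-1)})m_{(0)}$ to $e_{(2)}\in B_q^{++}$ and collapsing the two $\vp$-factors via $\vp(e,XY)=\sum\vp(e_{(1)},X)\vp(e_{(2)},Y)$ yields
$$e.(f.m)=\sum\vp(e,f_{(1)}m_{(-1)})f_{(2)}.m_{(0)}.$$

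The key technical observation is that $\vp(e,\cdot)$ factors through $\pi$: one has $\vp(e,x)=\vp(e,\pi(x))$ for every $e\in B_q^{++}$ and every $x\in B_q^-$. This follows from $\vp(e_i',t_\lambda)=0$ in the defining pairing, which by multiplicativity and $\Delta(t_\lambda)=t_\lambda\ts t_\lambda$ propagates to $\vp(e,t_\lambda)=\ve(e)$ for all $e\in B_q^{++}$; then for the triangular factorization $x=ft$ with $f\in B_q^{--}$, $t\in B_q^0$, the pairing axiom together with the counit relation yields $\vp(e,ft)=\vp(e,f)\ve(t)=\vp(e,\pi(ft))$. Inserting $\pi$ in the displayed identity and invoking the graded non-degeneracy of $\vp$ produces the first equality.

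The second equality is a reinterpretation: $\Delta_0(f)\rho(m)$ stands for the braided Hopf module compatibility formula associated with the primitive braided coproduct $\Delta_0(f_i)=f_i\ts 1+1\ts f_i$ on $B_q^{--}$ and the braiding $\sigma$ of ${}^{U_q}_{U_q}\mathcal{YD}$. On generators a direct check shows that the torus factor $t_i$ appearing in the ordinary coproduct $\Delta(f_i)=f_i\ts 1+t_i\ts f_i$ --- killed by $\pi$ after being moved past $m_{(-1)}$ --- reproduces precisely the braiding contribution $\sigma(f_i\ts m_{(-1)})$ in the braided formula; multiplicativity of both coproducts and of the module/comodule structures extends the identification to general $f$. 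The main subtlety of the whole argument is this careful bookkeeping of the two distinct coproducts, ordinary versus braided, and the projection $\pi$ is precisely what reconciles them.
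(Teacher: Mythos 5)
Your proposal is correct and follows essentially the same route as the paper's own proof: evaluate $e.(f.m)=(ef).m$ against all $e\in B_q^{++}$ to get $\sum\vp(e,f_{(1)}m_{(-1)})f_{(2)}.m_{(0)}$, insert $\pi$ via $\vp(e,x)=\vp(e,\pi(x))$ and conclude by non-degeneracy, then identify $(\pi\ts id)\Delta$ with $\Delta_0$ and match the torus conjugation with the braiding. The only cosmetic difference is that you expand $ef$ by the Heisenberg-double product formula where the paper uses the equivalent braided tensor product expression $ef=\sum(e_{(-1)}\cdot f)e_{(0)}$, and you spell out the justification of $\vp(e,\cdot)=\vp(e,\pi(\cdot))$ that the paper only asserts.
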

\begin{proof}
At first we calculate $\rho(f.m)$: for any $e\in B_q^{++}$,
\begin{eqnarray*}
e(f.m)=(ef).m &=& \sum (e_{(-1)}\cdot f)e_{(0)}m\\
&=& \sum \vp(e_{(0)},m_{(-1)})(e_{(-1)}\cdot f)m_{(0)}\\
&=& \sum \vp(e_{(-1)},f_{(1)})\vp(e_{(0)},m_{(-1)})f_{(2)}.m_{(0)}\\
&=& \sum \vp(e,f_{(1)}m_{(-1)})f_{(2)}m_{(0)}.
\end{eqnarray*}
Here, $f_{(1)}m_{(-1)}$ is not necessary in $B_q^{--}$, but we always have
$$\vp(e,f_{(1)}m_{(-1)})=\vp(e,\pi(f_{(1)}m_{(-1)})),$$
which gives the first equality.\\
\indent
For the second one, from the definition of braiding,
\begin{equation}\label{prim}
\Delta_0(f)\rho(m)=\sum f^{(1)}((f^{(2)})_{(-1)}.m_{(-1)})\ts (f^{(2)})_{(0)}m_{(0)},
\end{equation}
where $\Delta_0(f)=\sum f^{(1)}\ts f^{(2)}$.\\
\indent
As said in Remark \ref{remark7}, we look $B_q^{--}$ as a braided Hopf algebra when considering comodule structure, so $(f^{(2)})_{(0)}=f^{(2)}$ and from the definition of $\Delta$ in $B_q^{--}$, 
$$\pi(f_{(1)})((f_{(2)})_{(-1)}.m_{(-1)})=\pi(f_{(1)}m_{(-1)}).$$
Notice that $\Delta_0(f)=(\pi\ts id)(\Delta(f))$. So
$$\sum f^{(1)}\ts f^{(2)}=\sum\pi(f_{(1)})\ts f_{(2)}$$
and the formula above gives the second equality.
\end{proof}
In the categorical language, the proposition above says:
\begin{corollary}\label{equiv}
There exists an equivalence of category $\mathcal{O}(W_q)\sim {}_{B_q^{--}}^{B_q^{--}}\mathcal{M}$.
\end{corollary}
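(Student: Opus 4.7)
The plan is to exhibit mutually quasi-inverse functors $F\colon \mathcal{O}(W_q)\to {}_{B_q^{--}}^{B_q^{--}}\mathcal{M}$ and $G$ going back, each acting as the identity on underlying vector spaces and on morphisms. The functor $F$ is essentially already assembled in the preceding discussion: restrict a $W_q$-action along $B_q^{--}\hookrightarrow W_q$ to get the $B_q^{--}$-module structure, use the graded pairing $\vp$ together with the identification $(B_q^{++})^g\cong B_q^{--}$ to convert the locally nilpotent $B_q^{++}$-action into a left $B_q^{--}$-comodule structure $\rho$, and invoke Proposition \ref{compatibility} for the braided Hopf module compatibility axiom.

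For the functor $G$, given a braided Hopf module $(N,\ell,\rho)\in {}_{B_q^{--}}^{B_q^{--}}\mathcal{M}$, I would keep $\ell$ as the $B_q^{--}$-action and define a $B_q^{++}$-action by $e.n:=\sum\vp(e,n_{(-1)})n_{(0)}$. That this is a left action follows formally from $\vp$ being a Hopf pairing and $\rho$ being coassociative; local nilpotence is automatic because $(B_q^{++})^g$ is the \emph{graded} dual, so $\rho(n)$ is supported in only finitely many graded components and $e.n=0$ once $\deg(e)$ exceeds that support. The quantized Serre relations on the $e_i'$-action are dual, via $\vp$, to the Serre relations in $B_q^{--}$ which are encoded in $\rho$, so they are automatic as well.

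The only substantive check is the $W_q$-cross relation $e_i'f_j=q^{-(\alpha_i,\alpha_j)}f_je_i'+\delta_{ij}$ on $N$. This I would verify by running the calculation in the proof of Proposition \ref{compatibility} backwards: starting from
\[
e_i'(f_j.n)=\sum\vp\bigl(e_i',(f_j.n)_{(-1)}\bigr)(f_j.n)_{(0)},
\]
substitute the braided Hopf module axiom $\rho(f_j.n)=\Delta_0(f_j)\rho(n)$, expand the primitive braided coproduct $\Delta_0(f_j)=f_j\ts 1+1\ts f_j$, and contract using $\vp(e_i',f_j)=\delta_{ij}$ together with the YD-braided evaluation that produces the scalar $q^{-(\alpha_i,\alpha_j)}$ (this is where the Cartan weights enter, through the braiding $\sigma$ appearing in the compatibility of the module and comodule structures on $B_q^{--}$). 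This step is where I expect essentially all of the work to lie: it is morally forced by Proposition \ref{compatibility}, but requires careful bookkeeping with the braiding to see that the reverse direction of the implication really does reconstruct the cross relation rather than just a weaker consequence.

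Finally, $F\circ G=\mathrm{id}$ and $G\circ F=\mathrm{id}$ are immediate once $F$ and $G$ are in place: both are the identity on vector spaces and on the $B_q^{--}$-module part, and the non-degeneracy of $\vp$ on each graded piece $B_q^{++}(n)\times B_q^{--}(n)$ ensures that the passage \emph{locally nilpotent $B_q^{++}$-action $\leftrightarrow$ $B_q^{--}$-comodule} is an involution. A morphism in either category is by definition a linear map preserving the $B_q^{--}$-action together with the compatible dual data, so $F$ and $G$ are identities on Hom sets and the equivalence follows.
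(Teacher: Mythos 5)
Your proposal is correct and follows the same route as the paper, which states this corollary without separate proof as the ``categorical translation'' of the preceding constructions (restriction to $B_q^{--}$, the comodule structure via the graded dual $(B_q^{++})^g\cong B_q^{--}$, and Proposition \ref{compatibility}). Your explicit construction of the quasi-inverse functor $G$ and the verification that the cross relation $e_i'f_j=q^{-(\alpha_i,\alpha_j)}f_je_i'+\delta_{ij}$ is recovered from the braided Hopf module axiom simply fill in details the paper leaves implicit.
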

The following theorem gives the structural result for $W_q$-modules with finiteness condition.
\begin{theorem}\label{Wq}
There exists an equivalence of category $\mathcal{O}(W_q)\sim Vect$, where $Vect$ is the category of vector spaces. The equivalence is given by:
$$M\mapsto M^{co\rho},\ \ V\mapsto B_q^{--}\ts V,$$
where $M\in\mathcal{O}(W_q)$, $V\in Vect$, $M^{co\rho}=\{m\in M|\ \rho(m)=1\ts m\}$ is the set of coinvariants.
\end{theorem}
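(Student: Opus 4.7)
The plan is to compose the two earlier equivalences. Corollary \ref{equiv} identifies $\mathcal{O}(W_q)$ with the category ${}_{B_q^{--}}^{B_q^{--}}\mathcal{M}$ of braided Hopf modules over the braided Hopf algebra $B_q^{--}$ in ${}_{U_q}^{U_q}\mathcal{YD}$. Proposition \ref{triviality}, applied to $B=B_q^{--}$, supplies the triviality equivalence ${}_{B_q^{--}}^{B_q^{--}}\mathcal{M}\sim Vect$ realized by $N\mapsto N^{co\rho}$ and $V\mapsto B_q^{--}\ts V$ with the canonical trivial braided Hopf module structure. Composing these two equivalences yields the theorem.

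What has to be verified is only that the composite functors coincide with those named in the statement. In the forward direction, $M\in\mathcal{O}(W_q)$ is sent through Corollary \ref{equiv} to the same underlying space equipped with the comodule structure $\rho$ produced from the graded duality $(B_q^{++})^g\cong B_q^{--}$, and then via Proposition \ref{triviality} to $M^{co\rho}$; this is the first functor in the theorem. In the backward direction, $B_q^{--}\ts V$ carries the trivial braided Hopf module structure (left multiplication on the first factor and $\rho(b\ts v)=\Delta_0(b)\ts v$), which Corollary \ref{equiv} converts into a $W_q$-action by adjoining $e.(b\ts v)=\sum\vp(e,b_{(1)})\,b_{(2)}\ts v$ for $e\in B_q^{++}$. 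The explicit quasi-inverses are inherited from Proposition \ref{triviality}: the isomorphism $M\to B_q^{--}\ts M^{co\rho}$ sends $m\mapsto\sum m_{(-1)}\ts P(m_{(0)})$ with $P(m)=\sum S(m_{(-1)})m_{(0)}$, and its inverse is the multiplication map $b\ts m\mapsto b.m$; dually, $V\to (B_q^{--}\ts V)^{co\rho}=1\ts V$ is manifestly an isomorphism because the trivial comodule on $B_q^{--}\ts V$ has precisely $1\ts V$ as its space of coinvariants.

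The main bookkeeping hazard, already flagged in Remark \ref{remark7}, is to keep track of which coproduct on $B_q^{--}$ is in play: for the comodule structure, for the projection $P$, and for the antipode $S$ used to define $P$, one must work with $B_q^{--}$ as a braided Hopf algebra with primitive braided coproduct $\Delta_0$, not with the inherited Hopf algebra coproduct in $B_q^-$. This is precisely the distinction that made the compatibility in Proposition \ref{compatibility} take the form $\Delta_0(f)\rho(m)$ rather than $\Delta(f)\rho(m)$, and it is what allows Proposition \ref{triviality} to be invoked verbatim for $B_q^{--}$. Beyond respecting this convention, no new computation is needed: the theorem is a formal consequence of Corollary \ref{equiv} and Proposition \ref{triviality}.
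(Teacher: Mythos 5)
Your proposal is correct and follows exactly the paper's own route: the theorem is obtained by composing the equivalence $\mathcal{O}(W_q)\sim {}_{B_q^{--}}^{B_q^{--}}\mathcal{M}$ of Corollary \ref{equiv} with the triviality of braided Hopf modules from Proposition \ref{triviality}. The additional bookkeeping you supply (identifying the composite functors and tracking the use of $\Delta_0$ versus $\Delta$ as in Remark \ref{remark7}) is consistent with, and slightly more explicit than, the paper's two-line argument.
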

\begin{proof}
We have seen in the Corollary \ref{equiv} that $\mathcal{O}(W_q)$ and ${}_{B_q^{--}}^{B_q^{--}}\mathcal{M}$ are equivalent, so the theorem comes from the triviality of braided Hopf modules as showed in Proposition \ref{triviality}.
\end{proof}

It is better to write down an explicit formula for $\rho$.\\
\indent
For $\beta\in Q_+\cup\{0\}$, define
$$(B_q^{++})_\beta=\{x\in B_q^{++}|\ t_\lambda xt_\lambda^{-1}=q^{(\beta,\lambda)}x, \forall t_\lambda\in U_q^0\}.$$
Moreover, $(B_q^{--})_{-\beta}$ can be similarly defined. Elements in $(B_q^{++})_\beta$ (resp, $(B_q^{--})_{-\beta}$) are
called of degree $\beta$ (resp, $-\beta$).
Let $e_{\alpha,i}\in (B_q^{++})_{\alpha}$, $1\leq i\leq dim((B_q^{++})_{\alpha})$ be a basis of
$B_q^{++}$, $f_{\beta,j}$ be the dual basis respected to $\vp$, such that
$$\vp(e_{\alpha,i},f_{\beta,j})=\delta_{ij}\delta_{\alpha\beta}.$$
Formally, define $\rr=\sum_{i,\alpha}f_{\alpha,i}\ts e_{\alpha,i}$. Because $M\in\mathcal{O}(B_q)$,
$\rr(1\ts m)$ is well-defined for every $m\in M$.
\begin{proposition}\label{rho}
For $m\in M$, $\rho(m)=\rr(1\ts m)$.
\end{proposition}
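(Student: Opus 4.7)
The plan is to show that the formula $\widetilde{\rho}(m):=\rr(1\ts m)=\sum_{\alpha,i} f_{\alpha,i}\ts e_{\alpha,i}.m$ satisfies the defining property of the comodule structure $\rho$, and then conclude by uniqueness of $\rho$. Recall that $\rho$ was defined abstractly as the coaction dual to the locally nilpotent $B_q^{++}$-action, characterized by the adjoint formula $e.m=\sum\vp(e,m_{(-1)})m_{(0)}$ for every $e\in B_q^{++}$ and $m\in M$; so the game is to verify this identity for $\widetilde{\rho}$ and to note that it pins the coaction down uniquely.

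First I would check that $\widetilde{\rho}(m)$ is well-defined: since $M\in\mathcal{O}(W_q)$, the $B_q^{++}$-action on $m$ is locally nilpotent, so there are only finitely many pairs $(\alpha,i)$ with $e_{\alpha,i}.m\neq 0$, and $\widetilde{\rho}(m)$ is genuinely a finite element of $B_q^{--}\ts M$. Next, for any $e\in B_q^{++}$, I would compute
$$\sum\vp\bigl(e,\widetilde{\rho}(m)_{(-1)}\bigr)\,\widetilde{\rho}(m)_{(0)}=\sum_{\alpha,i}\vp(e,f_{\alpha,i})\,e_{\alpha,i}.m.$$
By linearity it suffices to test this against homogeneous $e=e_{\beta,k}$, and the biorthogonality $\vp(e_{\beta,k},f_{\alpha,i})=\delta_{\alpha\beta}\delta_{ki}$ collapses the right-hand side to $e_{\beta,k}.m$, which matches the defining formula for $\rho(m)$.

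Finally, since $\vp$ is nondegenerate on each homogeneous pair $(B_q^{++})_\alpha\times (B_q^{--})_{-\alpha}$, the adjoint relation $e.m=\sum\vp(e,n_{(-1)})n_{(0)}$ forced for all $e\in B_q^{++}$ determines at most one element $\sum n_{(-1)}\ts n_{(0)}\in B_q^{--}\ts M$; hence $\rho(m)=\widetilde{\rho}(m)$, which is the claim. The only genuinely substantive ingredient is the finiteness of $\widetilde{\rho}(m)$, which is immediate from the defining hypothesis of $\mathcal{O}(W_q)$; the rest is just unwinding the duality between the basis $\{e_{\alpha,i}\}$ of $B_q^{++}$ and its $\vp$-dual basis $\{f_{\alpha,i}\}$ of $B_q^{--}$, so I do not expect any serious obstacle.
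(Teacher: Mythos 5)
Your proposal is correct and is essentially the paper's own argument run in the opposite direction: the paper expands the existing $\rho(m)$ in the basis $\{f_{\alpha,j}\}$ and uses biorthogonality to identify the coefficients as $e_{\alpha,j}.m$, while you verify that the candidate $\rr(1\ts m)$ satisfies the adjoint relation and invoke nondegeneracy for uniqueness — the same two ingredients in either case. No gaps; your explicit remark on finiteness matches the paper's observation just before the proposition.
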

\begin{proof}
Let $m\in M$, we can write $\rho(m)=\sum_{\alpha,j} f_{\alpha,j}\ts m_{\alpha,j},$
then from the definition of left comodule structure,
$$e_{\beta,i}.m=\sum_{\alpha,j}\vp(e_{\beta,i},f_{\alpha,j})m_{\alpha,j}=m_{\beta,i}.$$
So
$\rho(m)=\sum_{\alpha,i} f_{\alpha,i}\ts e_{\alpha,i}.m=\rr(1\ts m).$
\end{proof}
\indent
It is better to verify formula (\ref{projection}) in an example.\\
\noindent
\textbf{Example.} Consider the $\mathfrak{sl}_2$ case, generators of $B_q(\mathfrak{sl}_2)$ will be
denoted by $e,f,t^{\pm 1}$.
We choose $m\in M$ such that $e.m\neq 0$, $e^2.m=0$.
Then $e^2f=q^{-4}fe^2+(q^{-2}+1)e$,
$$\rho(f.m)=1\ts fm+ f\ts efm+f^2\ts \frac{1}{q^{-2}+1}e^2fm
=1\ts fm+f\ts q^{-2}fem+f\ts m+ f^2\ts em,$$
\begin{eqnarray*}
(\pi\ts id)(\Delta(f)\rho(m))&=&(\pi\ts id)(f\ts m+f^2\ts em+t\ts fm+tf\ts fem)\\
&=& f\ts m+f^2\ts em+1\ts fm+f\ts q^{-2}fem.
\end{eqnarray*}
For the primitive coproduct,
\begin{eqnarray*}
\Delta_0(f)\rho(m) &=& f\ts m+f^2\ts em+1\ts fm+q^{-2}f\ts fem.
\end{eqnarray*}
\qed
\\

\indent
For a $W_q$-module $M$, $0\neq m\in M$ is called a maximal vector if it is annihilated by all $e_i'$. The set of all maximal vectors in $M$ is
denoted by $K(M)$.\\
\indent
The following lemma is a direct consequence of the definition.
\begin{lemma}\label{action}
Suppose that $m\in M^{co\rho}$. Then for any non-constant element $e\in B_q^{++}$, $e.m=0$.
\end{lemma}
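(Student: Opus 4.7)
The plan is to combine the coinvariance condition $\rho(m)=1\otimes m$ with the explicit formula for $\rho$ from Proposition \ref{rho} and then read off the vanishing of $e.m$ from the linear independence of the dual basis.

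First, I would isolate the degree-zero piece of the formal sum $\rr=\sum_{\alpha,i}f_{\alpha,i}\otimes e_{\alpha,i}$. Since $(B_q^{++})_0=\mathbb{C}$ and $(B_q^{--})_0=\mathbb{C}$, the dual basis pair in degree $0$ is simply $f_{0,1}=1$, $e_{0,1}=1$, and therefore
\[
\rho(m)=\rr(1\otimes m)=1\otimes m+\sum_{\alpha\in Q_+\setminus\{0\},\,i}f_{\alpha,i}\otimes e_{\alpha,i}.m.
\]

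Next, I would impose $m\in M^{co\rho}$, which by definition gives $\rho(m)=1\otimes m$. Subtracting, this forces
\[
\sum_{\alpha\in Q_+\setminus\{0\},\,i}f_{\alpha,i}\otimes e_{\alpha,i}.m=0
\]
in $B_q^{--}\otimes M$. The family $\{f_{\alpha,i}\}_{\alpha\neq 0,\,i}$ is part of a basis of $B_q^{--}$, hence linearly independent, so each coefficient must vanish: $e_{\alpha,i}.m=0$ for every $\alpha\in Q_+\setminus\{0\}$ and every $i$.

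Finally, any non-constant element $e\in B_q^{++}$ lies in $\bigoplus_{\alpha\neq 0}(B_q^{++})_\alpha$ and so is a (finite) linear combination of the basis vectors $e_{\alpha,i}$ with $\alpha\neq 0$. By linearity of the action, $e.m=0$, which is the claim. No real obstacle arises here; the only point to be careful about is the well-definedness of the infinite-looking sum $\rr(1\otimes m)$, but that has already been guaranteed by Proposition \ref{rho} using the finiteness condition defining $\mathcal{O}(W_q)$.
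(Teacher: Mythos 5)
Your argument is correct. The paper itself gives no proof (it calls the lemma ``a direct consequence of the definition''), and the intended direct argument is shorter than yours: by the very definition of the comodule structure, $e.m=\sum\vp(e,m_{(-1)})m_{(0)}$, so coinvariance $\rho(m)=1\ts m$ immediately gives $e.m=\vp(e,1)\,m=\ve(e)\,m=0$ for any $e$ in the augmentation ideal. Your route instead passes through the explicit dual-basis formula of Proposition \ref{rho} and the linear independence of the $f_{\alpha,i}$; this is valid (the sum lands in $B_q^{--}\ts M$, hence has only finitely many nonzero terms, so the independence argument applies), but it needlessly invokes Proposition \ref{rho}, whereas the one-line argument uses only the defining relation between the action and the coaction. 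One small point of care, which affects the paper's statement as much as your proof: ``non-constant'' must be read as ``having zero constant term'' (equivalently, lying in $\bigoplus_{\alpha\neq 0}(B_q^{++})_\alpha$, i.e.\ in the augmentation ideal); for a literally non-constant element such as $1+e_i'$ the conclusion fails, since it acts on $m$ as the identity does. Your sentence asserting that any non-constant element lies in $\bigoplus_{\alpha\neq 0}(B_q^{++})_\alpha$ is therefore only true under that reading, which is clearly the one intended throughout the paper.
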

\begin{lemma}\label{pairing}
Let $f\in B_q^{--}$, $f\notin \mathbb{C}^*$, such that for any $i$, $e_i'.f=0$. Then $f=0$. 
\end{lemma}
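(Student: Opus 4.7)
The plan is to transfer the hypothesis through the non-degenerate pairing $\vp:B_q^{++}\times B_q^{--}\to\mathbb{C}$ and then conclude by its graded non-degeneracy. Recall that $\vp$ pairs $(B_q^{++})_\gamma$ non-degenerately with $(B_q^{--})_{-\gamma}$ and vanishes between components of distinct degrees.

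The heart of the argument is the duality identity
\[
\vp(g\cdot e_i',\,f)=\vp(g,\,e_i'.f)\qquad\text{for all }g\in B_q^{++},\ f\in B_q^{--},\ i=1,\ldots,n,
\]
which expresses that the $W_q$-action $e_i'.(-)$ on $B_q^{--}$ is adjoint under $\vp$ to multiplication by $e_i'$ in $B_q^{++}$. To establish it I would unfold the product $(1\ts e_i')(f\ts 1)$ in $W_q\cong B_q^{--}\underline{\ts}B_q^{++}$ using the $U_q$-Yetter--Drinfel'd braiding (with $\delta(e_i')=(q_i^{-1}-q_i)K_iE_i\ts 1+K_i\ts e_i'$) and identify the resulting $B_q^{--}\ts 1$-component with the operator adjoint to multiplication by $e_i'$ dictated by the braided Hopf pairing axiom. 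This identification is already latent in the discussion preceding Proposition \ref{compatibility}, via the anti-isomorphism $(B_q^{++})^g\cong B_q^{--}$ and the braided-pairing lemma of Masuoka, and is the main technical step.

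With the identity in hand, the hypothesis $e_i'.f=0$ for every $i$ yields $\vp(g\cdot e_i',f)=0$ for every $g\in B_q^{++}$ and every $i$. Since $B_q^{++}$ is generated as an algebra by $e_1',\ldots,e_n'$, every homogeneous element of strictly positive degree in $B_q^{++}$ lies in the right ideal $\sum_i B_q^{++}\cdot e_i'$, and therefore $\vp(e,f)=0$ for every $e\in B_q^{++}$ of strictly positive degree.

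Finally, decompose $f=\sum_{\beta\in Q_+\cup\{0\}}f_{-\beta}$ into weight components. For each $\beta\neq 0$ and each $e\in(B_q^{++})_\beta$, graded orthogonality reduces $\vp(e,f)$ to $\vp(e,f_{-\beta})$, which has just been shown to vanish; non-degeneracy of $\vp$ on $(B_q^{++})_\beta\times(B_q^{--})_{-\beta}$ then forces $f_{-\beta}=0$. Hence $f=f_0\in\mathbb{C}\cdot 1$, and the assumption $f\notin\mathbb{C}^*$ gives $f=0$. The only delicate point is the duality identification in the second paragraph; the remainder is a routine grading argument.
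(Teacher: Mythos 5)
Your argument is correct, and it reaches the conclusion by a route that is a recognizable variant of the paper's rather than a copy of it. The paper's proof stays entirely on the ``action'' side: since $e\mapsto e.f$ is an algebra action, $e_i'.f=0$ for all $i$ forces $e.f=0$ for every non-constant $e$; writing $e.f=\sum\vp(e,f_{(1)})f_{(2)}$ with the $f_{(2)}$ linearly independent, non-degeneracy is then applied to the coproduct components $f_{(1)}$, which must all be constants, whence $f=\sum f_{(1)}\ve(f_{(2)})\in\mathbb{C}$ and so $f=0$. You instead transpose everything through the pairing at the outset via $\vp(ge_i',f)=\vp(g,e_i'.f)$, deduce $\vp(e,f)=0$ for all $e$ of positive degree from the fact that such $e$ lie in $\sum_i B_q^{++}e_i'$, and apply graded non-degeneracy directly to the weight components of $f$ itself. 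The two proofs use the same two ingredients (the action of $B_q^{++}$ on $B_q^{--}$ is dual to multiplication, and the graded pairing is non-degenerate), but yours avoids the slightly delicate ``choose the $f_{(2)}$ linearly independent'' step at the price of making the adjunction explicit. On that point you overestimate the difficulty: since the action in question is $e.f=\sum\vp(e,f_{(1)})f_{(2)}$, the identity $\vp(ge,f)=\sum\vp(g,f_{(2)})\vp(e,f_{(1)})=\vp(g,e.f)$ is an immediate consequence of axiom (2) of a generalized Hopf pairing, and there is no need to unfold the braided product in $B_q^{--}\underline{\ts}B_q^{++}$; the only genuine care required (in your proof and in the paper's alike) is the bookkeeping between $\Delta$ and $\Delta_0$, which the reference to Masuoka's lemma adequately covers.
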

\begin{proof}
If $e_i'.f=0$ for any $i$, $f$ is annihilated by all non-constant elements in $B_q^{++}$. For any $e\in B_q^{++}$, $e.f=\sum \vp(e,f_{(1)})f_{(2)}$, we can suppose that these $f_{(2)}$ are linearly independent, so $\vp(e,f_{(1)})=0$ for any $f_{(1)}$ and any non-constant $e\in B_q^{++}$, the non-degeneracy of the Hopf pairing forces $f_{(1)}$ to be constants.\\
\indent
So now $f=(id\ts\ve)\Delta(f)=\sum f_{(1)}\ve(f_{(2)})\in\mathbb{C}$ and it must be $0$ from the hypothesis.
\end{proof}

Combined with Theorem \ref{Wq} above, Lemma \ref{action} and \ref{pairing} give:
\begin{corollary}\label{K(M)}
Let $M\in\mathcal{O}(W_q)$ be a $W_q$-module. Then $M^{co\rho}=K(M)$.
\end{corollary}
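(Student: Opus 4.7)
The plan is to prove the two inclusions separately, leveraging Theorem \ref{Wq}, Lemma \ref{action} and Lemma \ref{pairing}. The inclusion $M^{co\rho}\subseteq K(M)$ is the easy direction: given a nonzero coinvariant $m$, Lemma \ref{action} tells us that every non-constant element of $B_q^{++}$ kills $m$, so in particular each generator $e_i'$ does, and $m$ is a maximal vector by definition.

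For the reverse inclusion $K(M)\subseteq M^{co\rho}$, my approach is to transport the problem through the equivalence of Theorem \ref{Wq}: identify $M$ with $B_q^{--}\ts M^{co\rho}$ carrying its trivial braided Hopf module structure. Under this identification $M^{co\rho}$ corresponds to the subspace $1\ts M^{co\rho}$, the coaction becomes $\rho(b\ts v)=\sum b_{(1)}\ts b_{(2)}\ts v$, and the induced $B_q^{++}$-action on $M$, recovered from the coaction via the Hopf pairing by $e.m=\sum\vp(e,m_{(-1)})m_{(0)}$, takes the product form
$$e_i'.(b\ts v)=\sum \vp(e_i',b_{(1)})\,b_{(2)}\ts v=(e_i'.b)\ts v,$$
where $e_i'.b:=\sum\vp(e_i',b_{(1)})b_{(2)}$ is the standard left action of $B_q^{++}$ on $B_q^{--}$ induced by $\vp$ — the very action appearing in Lemma \ref{pairing}. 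Writing $m\in K(M)$ as $\sum_k b_k\ts v_k$ with the $v_k\in M^{co\rho}$ linearly independent, the hypothesis $e_i'.m=0$ together with the linear independence of the $v_k$ yields $e_i'.b_k=0$ for every $i$ and $k$; Lemma \ref{pairing} then forces each $b_k\in\mathbb{C}$, and hence $m\in 1\ts M^{co\rho}$, which is $M^{co\rho}$ under the identification.

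The step I expect to require the most care is the verification that, on the model $B_q^{--}\ts V$, the induced $B_q^{++}$-action really has the product form $e.(b\ts v)=(e.b)\ts v$. This is a bookkeeping exercise: one must trace through the equivalences of Theorem \ref{Wq} and Corollary \ref{equiv}, apply the dualization recipe $e.m=\sum\vp(e,m_{(-1)})m_{(0)}$ with the \emph{braided} coproduct $\Delta_0$ on $B_q^{--}$ as emphasized in Remark \ref{remark7}, and check that the two conventions are compatible with the pairing used in Lemma \ref{pairing}. Once that identification is in hand, the linear-independence argument followed by Lemma \ref{pairing} disposes of the second inclusion immediately.
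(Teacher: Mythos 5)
Your proposal is correct and follows exactly the route the paper intends: the paper gives no written proof beyond citing Theorem \ref{Wq}, Lemma \ref{action} and Lemma \ref{pairing}, and your argument uses precisely these three ingredients (Lemma \ref{action} for $M^{co\rho}\subseteq K(M)$, and the trivialization $M\cong B_q^{--}\ts M^{co\rho}$ plus Lemma \ref{pairing} for the converse). The ``bookkeeping'' step you flag is indeed harmless, since the coaction was \emph{defined} by dualizing the $B_q^{++}$-action via $\vp$, so transporting the coaction through the Hopf-module isomorphism automatically transports the action to $e.(b\ts v)=(e.b)\ts v$.
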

\begin{remark}
The corollary above means that coinvariants are exactly those "extremal vectors" defined in \cite{Nak04}.
\end{remark}

\subsection{Modules over $B_q(\g)$}
Recall the definition of the category $\mathcal{O}(B_q)$ in \cite{Nak04}: it is a full subcategory of left module category over $B_q(\g)$
containing objects satisfying the following conditions:\\
(i). Any object $M$ in $\mathcal{O}(B_q)$ has a weight space decomposition:
$$M=\bigoplus_{\lambda\in\mathcal{P}}M_\lambda,\ \ M_\lambda=\{m\in M|\ t_\mu.m=q^{(\mu,\lambda)}m\}.$$
(ii). For any $M$ in $\mathcal{O}(B_q)$ and any $m\in M$, there exists an integer $l>0$ such that for any $1\leq i_1,\cdots,i_l\leq n$, $e_{i_1}'e_{i_2}'\cdots e_{i_l}'.m=0$.\\
\indent
Moreover, we denote $\mathcal{O}'(B_q)$ the category of $B_q$-modules satisfying only (ii) above.\\

\indent
The main theorem of this article is the following structural result.
\begin{theorem}\label{main}
There exists an equivalence of category $\mathcal{O}'(B_q)\sim{}_{U_q^0}Mod$. The equivalence is given by:
$$M\mapsto K(M),\ \ \ V\mapsto B_q^{--}\ts V,$$
where $M\in \mathcal{O}'(B_q)$, $V$ is a $U_q^0$-module and $K(M)$ is the set of maximal vectors in $M$, when looked as a $W_q$-module.\\
\indent 
Moreover, when restricted to the subcategory $\mathcal{O}(B_q)$, the equivalence above gives $\mathcal{O}(B_q)\sim {}_{\mathcal{P}}Gr$, where the latter is the category of $\mathcal{P}$-graded vector spaces.
\end{theorem}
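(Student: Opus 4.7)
The plan is to bootstrap from Theorem \ref{Wq} by restriction along the inclusion $W_q\hookrightarrow B_q$, then to track how the torus $U_q^0$ interacts with the resulting equivalence. Given $M\in\mathcal{O}'(B_q)$, I first restrict to obtain a $W_q$-module in $\mathcal{O}(W_q)$, so that Theorem \ref{Wq} combined with Corollary \ref{K(M)} produces an isomorphism $B_q^{--}\ts K(M)\xrightarrow{\sim} M$, $f\ts v\mapsto fv$, of $W_q$-modules. The key observation is that $K(M)$ is automatically $U_q^0$-stable: from the relation $e_i' t_\lambda = q^{-(\alpha_i,\lambda)} t_\lambda e_i'$, if $v\in K(M)$ then $e_i'(t_\lambda v) = q^{-(\alpha_i,\lambda)} t_\lambda e_i' v = 0$, so $K(M)$ inherits a $U_q^0$-module structure. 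This defines the functor $M\mapsto K(M)$.

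For the inverse direction, given a $U_q^0$-module $V$, I would equip $B_q^{--}\ts V$ with the $W_q$-action from Theorem \ref{Wq} together with the diagonal $U_q^0$-action $t_\lambda\cdot(f\ts v) = \mathrm{Ad}(t_\lambda)(f)\ts t_\lambda v$. Because $B_q$ is essentially the smash product $W_q\rtimes U_q^0$ for the adjoint action, checking the relations $t_\lambda e_i' t_\lambda^{-1} = q^{(\alpha_i,\lambda)} e_i'$ and $t_\lambda f_i t_\lambda^{-1} = q^{-(\alpha_i,\lambda)} f_i$ on $B_q^{--}\ts V$ reduces to a direct verification. I would then check that the isomorphism $B_q^{--}\ts K(M)\xrightarrow{\sim} M$ is $U_q^0$-equivariant: on the right $t_\lambda(fv) = \mathrm{Ad}(t_\lambda)(f)\cdot (t_\lambda v)$, which matches the diagonal action on the left. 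In the reverse composition, the coinvariants of the trivial braided Hopf module $B_q^{--}\ts V$ are precisely $1\ts V$, and the identification $1\ts V \cong V$ is manifestly $U_q^0$-equivariant.

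For the refinement to $\mathcal{O}(B_q)$: since $U_q^0\cong\mathbb{C}[\mathcal{P}]$ is a group algebra, a $U_q^0$-module admits a decomposition into eigenspaces $M_\lambda = \{m: t_\mu m = q^{(\mu,\lambda)}m\}$ for $\lambda\in\mathcal{P}$ precisely when it is a $\mathcal{P}$-graded vector space. Under the equivalence, $M$ satisfies condition (i) iff $K(M)$ does, since $B_q^{--}$ itself is $(-Q_+)$-graded under the adjoint $U_q^0$-action, so the weights of $B_q^{--}\ts V$ lie in $\mathrm{supp}(V)-Q_+\subset\mathcal{P}$ and are determined by those of $V$.

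The main obstacle I foresee is keeping the three distinct $U_q^0$-actions straight---the adjoint action on $B_q^{--}$, the prescribed action on $V$, and the diagonal action on the tensor product---and ensuring that the isomorphism from Theorem \ref{Wq} is upgraded to carry these structures without hidden cocycle corrections. Once that bookkeeping is under control, the result is the module-theoretic shadow of the fact that $B_q$ is a semidirect product extension of $W_q$ by the torus $U_q^0$.
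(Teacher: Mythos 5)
Your proposal is correct and follows essentially the same route as the paper: restrict along $W_q\hookrightarrow B_q$, invoke Theorem \ref{Wq} together with Corollary \ref{K(M)}, observe that $K(M)$ is $U_q^0$-stable, and define the quasi-inverse on $B_q^{--}\ts V$ by the $W_q$-action of the trivial braided Hopf module together with the diagonal action $t.(x\ts v)=txt^{-1}\ts tv$, which is exactly the $B_q$-module structure the paper writes down. Your explicit equivariance checks and the commutation $e_i't_\lambda=q^{-(\alpha_i,\lambda)}t_\lambda e_i'$ merely spell out steps the paper leaves implicit.
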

\begin{remark}
It should be remarked that the method in this article can be generalized to Nichols algebras of diagonal type as in \cite{Mas09}.
\end{remark}
The next subsection is devoted to the proof of this result.

\subsection{Proof of Theorem \ref{main}}
We proceed to the proof of Theorem \ref{main}.\\
\indent
At first, for any $U_q^0$-module $V$, we may look it as a vector space through the forgetful functor. So from Theorem \ref{Wq}, $N=B_q^{--}\ts V$ admits a locally finite $W_q$-module structure such that $N^{co\rho}=V$. Moreover, if the $U_q^0$-module structure on $V$ is under consideration, there exists a $B_q$-module structure over $B_q^{--}\ts V$ given by: 
for $v\in V$, $x,f\in B_q^{--}$, $e\in B_q^{++}$, $t\in B_q^0$,
$$e.(x\ts v)=\sum \vp(e,x_{(1)})x_{(2)}\ts v,\ \ f.(x\ts v)=fx\ts v,\ \ t.(x\ts v)=txt^{-1}\ts tv.$$
\indent
As a summary, the discussion above gives a functor ${}_{U_q^0}Mod\ra \mathcal{O}'(B_q)$.\\
\indent
From now on, let $M\in \mathcal{O}'(B_q)$ be a $B_q$-module with finiteness condition.\\
\indent
The restriction from $B_q$-modules to $W_q$-modules gives a functor $\mathcal{O}'(B_q)\ra \mathcal{O}(W_q)$, thus we obtain a functor $\mathcal{O}'(B_q)\ra {}_{U_q^0}Mod$ by composing with the equivalence functor $\mathcal{O}(W_q)\ra Vect$.\\
\indent
From Theorem \ref{Wq} and the module structures defined above, these two functors give an equivalence of category $\mathcal{O}'(B_q)\sim {}_{U_q^0}Mod$. So the first point of Theorem \ref{main} comes from Corollary \ref{K(M)}.\\
\indent
The second point comes from the equivalence of the $U_q^0$-modules satisfying condition (i) in $\mathcal{O}(B_q)$ and $\mathcal{P}$-graded vector spaces.

\subsection{Semi-simplicity}
Now it is easy to obtain the structural result for $\mathcal{O}(B_q)$ as showed in the article of Nakashima.\\
\indent
The following result is a direct corollary of Theorem \ref{main} and Lemma \ref{action}.
\begin{corollary}
Let $M\in\mathcal{O}(B_q)$ be a nontrivial $B_q$-module. There exists nonzero maximal vectors in $M$.
\end{corollary}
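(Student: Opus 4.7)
The plan is to read the corollary as a direct consequence of the structural equivalence in Theorem \ref{main}, together with the identification of coinvariants with maximal vectors.

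First, I would restrict $M$ from $B_q(\g)$ to $W_q(\g)$. Since $M\in\mathcal{O}(B_q)$ satisfies the local nilpotency condition (ii) in the definition of $\mathcal{O}(B_q)$, its restriction belongs to $\mathcal{O}(W_q)$. Thus Theorem \ref{Wq} applies and yields an isomorphism of $W_q$-modules
$$M\cong B_q^{--}\ts M^{co\rho},$$
where $M^{co\rho}$ is the space of coinvariants for the $B_q^{--}$-comodule structure constructed in Section \ref{wqbraid}.

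Next, I would invoke Corollary \ref{K(M)}, which identifies $M^{co\rho}=K(M)$, to rewrite the isomorphism as $M\cong B_q^{--}\ts K(M)$. Since $M$ is assumed to be nontrivial, i.e.\ nonzero, the tensor factor $K(M)$ cannot be the zero vector space: otherwise the right hand side would vanish, contradicting $M\neq 0$. Hence $K(M)\neq 0$, which by definition means that $M$ admits a nonzero maximal vector.

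There is no real obstacle here; the work has already been done in establishing Theorem \ref{Wq} and Corollary \ref{K(M)}. The only minor point to check is that the restriction functor $\mathcal{O}(B_q)\to\mathcal{O}(W_q)$ does not collapse $M$ to zero, which is automatic since the underlying vector space of $M$ is preserved. Once this is noted, the argument is a one-line application of the equivalence in Theorem \ref{main}.
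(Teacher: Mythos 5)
Your argument is correct and is essentially the route the paper intends: the paper dismisses this corollary as a direct consequence of Theorem \ref{main} (whose proof is precisely the restriction to $\mathcal{O}(W_q)$, Theorem \ref{Wq}, and the identification $M^{co\rho}=K(M)$ of Corollary \ref{K(M)} that you spell out). The observation that $B_q^{--}\ts K(M)=0$ would force $M=0$ is exactly the point.
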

\indent
For $\lambda\in\mathcal{P}$, define a left ideal of $B_q$ by:
$$I_\lambda=\sum_i B_q e_i'+\sum B_q(t_\alpha-q^{(\lambda,\alpha)}),$$
denote $H(\lambda)=B_q/I_\lambda$, then $H(\lambda)$ is a free $B_q^{--}$-module of rank 1, generated by $1$. The following structural results follow from Theorem \ref{main}.
\begin{corollary}
Let $M\in\mathcal{O}(B_q)$, $v\in M$ be a maximal vector of weight $\lambda$. Then
$B_q^{--}\ts \mathbb{C}v\ra H(\lambda)$, $F\ts v\mapsto F.v$ is an isomorphism of $B_q$-modules.
In particular, $H(\lambda)$ are all simple $B_q$-modules.
\end{corollary}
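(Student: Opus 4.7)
The plan is to deduce this corollary as a clean consequence of Theorem \ref{main} together with the preceding corollary asserting the existence of nonzero maximal vectors in every nontrivial object of $\mathcal{O}(B_q)$.

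First I would produce the $B_q$-module map. Since $v$ is maximal of weight $\lambda$, the relations $e_i'.v=0$ and $t_\alpha.v=q^{(\lambda,\alpha)}v$ show that the $B_q$-linear map $B_q\to M$, $b\mapsto b.v$, factors through the left ideal $I_\lambda$, giving a surjective $B_q$-module morphism $\Phi:H(\lambda)\twoheadrightarrow B_q.v$. Composing with the identification $B_q^{--}\ts \mathbb{C}v\cong H(\lambda)$, $F\ts v\mapsto \overline{F}$ (which is an isomorphism of $B_q^{--}$-modules, given that $H(\lambda)$ is free of rank one over $B_q^{--}$), it suffices to show that $F\mapsto F.v$ is injective on $B_q^{--}$.

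For injectivity I would invoke Theorem \ref{main}. The one-dimensional subspace $\mathbb{C}v\subset K(M)$ is stable under $U_q^0$ (since $v$ is a weight vector), so under the equivalence $\mathcal{O}'(B_q)\sim {}_{U_q^0}\mathrm{Mod}$ it corresponds to the $B_q$-submodule $B_q^{--}\ts \mathbb{C}v\hookrightarrow B_q^{--}\ts K(M)\cong M$, and this embedding is precisely $F\ts v\mapsto F.v$ by the explicit formula defining the functor $V\mapsto B_q^{--}\ts V$. Hence the map is injective, $\Phi$ is an isomorphism, and the first assertion is proved.

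For simplicity of $H(\lambda)$, let $N\subset H(\lambda)$ be a nonzero $B_q$-submodule; by the preceding corollary $N$ contains a nonzero maximal vector $w$. But under the equivalence of Theorem \ref{main}, $H(\lambda)$ corresponds to the one-dimensional $U_q^0$-module $\mathbb{C}$, so $K(H(\lambda))=\mathbb{C}\cdot 1$ and therefore $w\in\mathbb{C}^*\cdot 1$. Hence $1\in N$ and $N=H(\lambda)$. Finally, to see that every simple object $M\in\mathcal{O}(B_q)$ is some $H(\lambda)$, pick a nonzero maximal vector $v\in M$ of weight $\lambda$; the map $\Phi:H(\lambda)\to M$ constructed above is nonzero, so simplicity of $M$ forces surjectivity and simplicity of $H(\lambda)$ forces injectivity, giving $M\cong H(\lambda)$. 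The only subtlety I anticipate is bookkeeping the identification between the abstract functor $V\mapsto B_q^{--}\ts V$ of Theorem \ref{main} and the concrete action $F\ts v\mapsto F.v$, but this is immediate from the explicit module structure on $B_q^{--}\ts V$ recorded in the proof of Theorem \ref{main}.
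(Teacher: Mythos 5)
Your proposal is correct and follows exactly the route the paper intends: the paper offers no written proof beyond the remark that the corollary ``follows from Theorem \ref{main}'', and your argument is a sound filling-in of that deduction, using the explicit isomorphism $B_q^{--}\ts K(M)\to M$, $b\ts m\mapsto b.m$ for injectivity and the existence of maximal vectors for simplicity. The only cosmetic point is that the classification of simples in your last step belongs to the subsequent corollary in the paper, but including it does no harm.
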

\begin{corollary}
(1). Let $M$ be a simple $B_q$-module. Then there exists some $\lambda$ such that $M\cong H(\lambda)$.\\
(2). Suppose that $M\in\mathcal{O}(B_q)$. Then $M$ is semi-simple.\\
\end{corollary}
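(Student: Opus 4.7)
The plan is to reduce both parts to Theorem \ref{main} and the preceding corollary, which already package essentially all the work.

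For part (1), I would start with a simple $M\in\mathcal{O}(B_q)$. The corollary on the existence of nonzero maximal vectors produces some $0\neq v\in K(M)$. Using axiom (i) (the weight decomposition of $M$) together with the fact that each $e_i'$ raises weight by $\alpha_i$ (which follows from $t_\lambda e_i' t_\lambda^{-1}=q^{(\alpha_i,\lambda)}e_i'$), each weight component of a maximal vector is itself maximal, so I may replace $v$ by a nonzero homogeneous component and assume $v\in K(M)\cap M_\lambda$ for some $\lambda\in\mathcal{P}$. The preceding corollary then yields an injection $H(\lambda)\cong B_q^{--}\ts\mathbb{C}v\hookrightarrow M$ of $B_q$-modules, and simplicity of $M$ forces this map to be surjective, giving $M\cong H(\lambda)$.

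For part (2), I would transport $M$ under the equivalence $\mathcal{O}(B_q)\sim{}_{\mathcal{P}}Gr$ of Theorem \ref{main} to the $\mathcal{P}$-graded vector space $K(M)=\bigoplus_{\lambda\in\mathcal{P}}K(M)_\lambda$. Choosing a homogeneous basis $\{v_i\}$ of $K(M)$, with each $v_i$ of weight $\lambda_i$, the graded decomposition $K(M)=\bigoplus_i\mathbb{C}v_i$ is carried back by the inverse functor $V\mapsto B_q^{--}\ts V$ to a $B_q$-module decomposition
\[M\cong\bigoplus_i (B_q^{--}\ts\mathbb{C}v_i)\cong\bigoplus_i H(\lambda_i),\]
each summand of which is simple by part (1). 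This establishes semi-simplicity and, at the same time, refines (1) to an explicit decomposition into isotypic components indexed by weights.

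The only genuinely delicate step is the first one in (1), namely extracting a maximal vector of pure weight: this is exactly where axiom (i) of $\mathcal{O}(B_q)$ enters, and illustrates why the full weight-decomposition hypothesis (and not merely the local-nilpotency condition (ii) defining $\mathcal{O}'(B_q)$) is needed for the classification of simple objects. Everything else is a formal consequence of the equivalence of categories already in hand.
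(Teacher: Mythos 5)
Your argument is correct and follows exactly the route the paper intends: the corollary is stated there without proof as a direct consequence of Theorem \ref{main} and the two preceding corollaries, and your proposal simply fills in the details (extracting a maximal vector of pure weight via axiom (i), then using the embedding $H(\lambda)\cong B_q^{--}\ts\mathbb{C}v\hookrightarrow M$ and the equivalence with $\mathcal{P}$-graded vector spaces). The only nitpick is that the simplicity of each summand $H(\lambda_i)$ in part (2) is supplied by the preceding corollary (``$H(\lambda)$ are all simple $B_q$-modules''), not by part (1), which only asserts the converse statement.
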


\subsection{Extremal projector}
This section is devoted to the computation of the projection $P$ in the $\mathfrak{sl}_2$ case, and show that
it is exactly the operator given by the formula (3.2.2) of \cite{Kas91}. Moreover, changing $q$ by $q^{-1}$
will lead us to the formula in Example 5.1 of \cite{Nak04}.\\
\indent
At first, we calculate $\Delta_0:B_q^{--}\ra B_q^{--}\underline{\ts} B_q^{--}$ and the antipode; note that
the multiplication is twisted in the right hand side.
\begin{lemma}
(1). $\D\Delta_0(f^n)=\sum_{p=0}^n \bino{n}{p} q^{p^2-np}f^p\ts f^{n-p}$,\\
(2). $S(f^n)=(-1)^nq^{-n(n-1)}f^n$.
\end{lemma}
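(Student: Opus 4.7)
The plan is to prove both parts by induction on $n$, using two general facts about braided Hopf algebras: $\Delta_0$ is an algebra morphism into the braided tensor product $B_q^{--}\underline{\ts}B_q^{--}$, and the antipode is a braided anti-homomorphism, $S\circ m=m\circ\sigma\circ(S\ts S)$. The single input that must be pinned down first is the braiding constant on $f\ts f$. In the $\mathfrak{sl}_2$ case, $f$ is a homogeneous generator whose coaction has grouplike part $K\ts f$; since $K\cdot f=q^{-2}f$ under the Schr\"odinger action, one reads off $\sigma(f\ts f)=q^{-2}(f\ts f)$. This is exactly the coefficient $q^{-2}$ appearing in the worked example just before the lemma. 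Because $B_q^{--}=\mathbb{C}[f]$ is generated by this one braided primitive, iterating the hexagon axiom gives
\[
\sigma(f^a\ts f^b)=q^{-2ab}\,f^b\ts f^a,
\]
equivalently $YX^p=q^{-2p}X^pY$ in $B_q^{--}\underline{\ts}B_q^{--}$, where $X=f\ts 1$ and $Y=1\ts f$.

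For (1), since $\Delta_0(f)=X+Y$ is primitive, $\Delta_0(f^n)=(X+Y)^n$ in the braided tensor product. I would prove by induction on $n$ that
\[
(X+Y)^n=\sum_{p=0}^n\bino{n}{p}q^{p^2-np}X^pY^{n-p}
\]
by expanding $(X+Y)(X+Y)^{n-1}$, applying $YX^p=q^{-2p}X^pY$ to move every $Y$ past $X^p$, reindexing the left shift, and collapsing the two resulting sums via the $q$-Pascal identity $\bino{n}{p}=q^{n-p}\bino{n-1}{p-1}+q^{-p}\bino{n-1}{p}$. Substituting $X^pY^{n-p}=f^p\ts f^{n-p}$ and noting $p^2-np=-p(n-p)$ then yields the stated formula.

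For (2), the base case $S(f)=-f$ is forced by $m(S\ts\mathrm{id})\Delta_0(f)=\ve(f)\cdot 1=0$ applied to the primitive element $f$. Inductively, assuming $S(f^{n-1})=(-1)^{n-1}q^{-(n-1)(n-2)}f^{n-1}$, the braided anti-homomorphism property gives
\[
S(f^n)=S(f^{n-1}\cdot f)=m\circ\sigma\bigl(S(f^{n-1})\ts S(f)\bigr)=(-1)^n q^{-(n-1)(n-2)}\,m\bigl(\sigma(f^{n-1}\ts f)\bigr),
\]
and inserting $\sigma(f^{n-1}\ts f)=q^{-2(n-1)}f\ts f^{n-1}$ produces $(-1)^n q^{-(n-1)(n-2)-2(n-1)}f^n=(-1)^n q^{-n(n-1)}f^n$, closing the induction. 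The only real obstacle is identifying the braiding constant $\sigma(f\ts f)=q^{-2}(f\ts f)$ and its iterated form; once that is in hand both claims reduce to routine quantum-binomial bookkeeping.
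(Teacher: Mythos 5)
Your proof is correct. Part (1) is essentially the paper's argument: induction on $n$ in the braided tensor product, with the coefficient $q^{-2}$ in $YX=q^{-2}XY$ read off from the coaction $f\mapsto t'\ts f$ (equivalently from the $q^{-2}$ visible in the worked example preceding the lemma), closed by a $q$-Pascal identity; your identity $\bino{n}{p}=q^{n-p}\bino{n-1}{p-1}+q^{-p}\bino{n-1}{p}$ is an equivalent variant of the one the paper cites, and your exponent bookkeeping ($q^{p^2-np}=q^{-p(n-p)}$) checks out. Part (2) is where you genuinely diverge: the paper applies $S\ts id$ to the already-established formula for $\Delta_0(f^n)$, uses the convolution-inverse property $m(S\ts id)\Delta_0(f^n)=\ve(f^n)1=0$ to get a recursion for $S(f^n)$, and resolves it with the alternating sum $\sum_{i=0}^r(-1)^iq^{-i(r-1)}\bino{r}{i}=0$; you instead invoke the braided anti-multiplicativity $S\circ m=m\circ\sigma\circ(S\ts S)$, which turns the induction into the single computation $(n-1)(n-2)+2(n-1)=n(n-1)$ and needs no binomial identity at all. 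Your route is shorter and conceptually cleaner, at the cost of citing a general structural fact about antipodes of Hopf algebras in braided categories (which holds here and is equivalent, by naturality of $\sigma$, to the form $m\circ(S\ts S)\circ\sigma$ found in the literature); the paper's route is self-contained given part (1) but requires verifying an extra $q$-identity.
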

\begin{proof}
(1). By induction on $n$ and use the identity
$$\bino{n+1}{p+1}=q^{p+1}\bino{n}{p+1}+q^{p-n}\bino{n}{p}.$$
(2). Apply $S\ts id$ on the formula of $\Delta_0(f^n)$ then use induction and the identity
$$\sum_{i=0}^r (-1)^iq^{-i(r-1)}\bino{r}{i}=0.$$
\end{proof}

From Proposition \ref{rho}, in the $\mathfrak{sl}_2$ case, we obtain a well-defined morphism:
$$\rho(m)=\sum_{n=0}^\infty q^{\frac{n(n-1)}{2}}\frac{f^n}{[n]!}\ts e^n.m,$$
and then
$$P(m)=\sum_{n=0}^\infty (-1)^n q^{-\frac{n(n-1)}{2}}\frac{f^n}{[n]!}e^n.m.$$
It is exactly the operator defined in \cite{Kas91}, (3.2.2) and almost
the extremal projector $\Gamma$ in \cite{Nak04}.\\
\\

\end{document}